\DeclareSymbolFont{script}{U}{eus}{m}{n}
\DeclareMathSymbol{\Wedge}{0}{script}{"5E}
\newtheorem{theorem}{Theorem}
\newtheorem{cor}{Corollary}
\newtheorem{definition}{Definition}
\newtheorem{remark}{Remark}
\newtheorem{proposition}{Proposition}
\newtheorem{lemma}{Lemma}
\newcommand{\acknowledge}{\subsection*{Acknowledgements}}
\DeclareMathOperator{\id}{id}
\DeclareMathOperator{\ri}{r}
\DeclareMathOperator{\End}{End}
\DeclareMathOperator{\spanu}{span}
\DeclareMathOperator{\Ph}{ \Phi_t^V}
\DeclareMathOperator{\Phe}{ \Phi_t^{V^c}}
\newcommand{\iso}{\text{iso}}
\newcommand{\aut}{\text{aut}}
\newcommand{\s}{\mathfrak{s}}
\newcommand{\R}{\mathbb{R}}
\renewcommand{\H}{\mathbb{H}}
\renewcommand{\u}{\mathfrak{u}}
\newcommand{\m}{\mathfrak{m}}
\newcommand{\so}{\mathfrak{so}}
\renewcommand{\sp}{\mathfrak{sp}}
\newcommand{\h}{\mathfrak{h}}
\begin{document}

\title{ C-projective symmetries of submanifolds in quaternionic geometry}

%\institute{Aleksandra Bor\'owka \at
% Institute of Mathematics, Jagiellonian University\\
%30-348 Krak\'ow,  Poland\\
%\email{Aleksandra.Borowka@uj.edu.pl}
%\and Henrik Winther \at Department of Mathematics and Statistics, Faculty of Science, Masaryk University\\
%Kotlarska 2, 611 37 Brno, Czechia \\
%\email{winther@math.muni.cz}
%}	

\author{Aleksandra Bor\'owka$^\dag$ and Henrik Winther$^\ddag$}
\address{$\dag$ Institute of Mathematics, Jagiellonian University 30-348 Krak\'ow,  Poland}
\email{aleksandra.borowka@uj.edu.pl}
\address{$\ddag$ Department of Mathematics and Statistics, Masaryk University, Kotlarska 2, 611 37 Brno, Czechia.}
\email{winther@math.muni.cz}

\maketitle
\begin{abstract}
The generalized Feix--Kaledin construction shows that c-projective $2n$-manifolds with curvature of type $(1,1)$ are precisely the submanifolds of quaternionic $4n$-manifolds which are fixed points set of a special type of quaternionic $S^1$ action $v$. In this paper, we consider this construction in the presence of infinitesimal symmetries of the two geometries. First, we prove that the submaximally symmetric c-projective model with type $(1,1)$ curvature is a submanifold of a submaximally symmetric quaternionic model, and show how this fits into the construction. We give conditions for when the c-projective symmetries extend from the fixed points set of $v$ to quaternionic symmetries, and we study the quaternionic symmetries of the Calabi-- and Eguchi-Hanson hyperk\"ahler structures, showing that in some cases all quaternionic symmetries are obtained in this way.

\smallskip
\noindent \textbf{Keywords.} c-projective structure, quaternionic structure, symmetries, submaximally symmetric spaces, Calabi metric
% \PACS{PACS code1 \and PACS code2 \and more}
% \subclass{58D19 \and 53B15 \and 53A20 \and 53C28}
\end{abstract}
\section{Introduction}
Let $(S,J)$ be a complex manifold equipped with a complex connection $\nabla$. A curve $\gamma$ is called {\it $J$--planar} if it satisfies
$$ \nabla_{\gamma^\cdot}\gamma^\cdot \in \text{Span}_\R(\gamma^\cdot, J \gamma^\cdot) $$
The {\it c--projective class} $[\nabla]_c$ of $\nabla$ is then the class of complex connections which share the same $J$--planar curves, up to reparameterization. The triple $(S,J,[\nabla]_c)$ is called a c--projective manifold, and the assignment of a class $[\nabla]_c$ to a complex manifold is called a c--projective structure. 
An almost quaternionic structure on a manifold $M$ of dimension $4n$ is a smooth subbundle $Q\subset \text{End}(TM)$ which is of rank 3, and which can be locally generated by almost complex structures $I,J,K$ which anti-commute amongst themselves. A connection which preserves $Q$ is called a {\it quaternionic connection}. If $Q$ admits a torsion--free quaternionic connection, then it is called quaternionic, dropping the word 'almost'. From here on we assume that all quaternionic connections are torsion--free.
Quaternionic-- and c--projective geometries are both examples of parabolic geometries\cite{CS}. In both cases, the main differential invariant is a kind of Weyl curvature, and in the c--projective case, this curvature decomposes into 3 parts, called the $(2,0), (1,1)$ and $(0,2)$--parts.\cite{Alex}\cite{CEMN}

The first author and D. Calderbank established a relationship between c-projective geometries with Weyl curvature of type $(1,1)$ and quaternionic geometries in \cite{BC}. It turns out that real analytic c-projective manifolds with Weyl curvature of type $(1,1)$ are precisely the maximal totally complex submanifolds of quaternionic manifolds with a local circle action of a special kind. Moreover, the family of quaternionic manifolds containing a fixed c-projective submanifold $S$ with type $(1,1)$ c-projective curvature is  parametrized by holomorphic line bundles equipped with compatible real-analytic complex connections on $S$ with type $(1,1)$ curvature. From this point of view it is natural to ask what the consequences are for the algebras of c-projective and quaternionic symmetries, and in particular for the dimension of the algebra of submaximal symmetries.

For parabolic Cartan geometries on manifolds, the algebra of infinitesimal symmetries, i.e., the Lie algebra of vector fields whose local flow preserves the structure (see for example \cite{CS}, \cite{Zal}) is finite dimensional. The maximal dimension of such an algebra, for fixed dimension of the underlying manifold, is achieved for flat structures and usually, if the structure is not flat, the possible symmetry dimension drops significantly. Therefore it is natural to ask for the maximal dimension of the algebra of symmetries for non-flat structures, and this is called the submaximal symmetry. When the drop between maximal and submaximal symmetry dimension is significant, it is called a gap phenomenon, and for parabolic Cartan geometries in general this was studied by B. Kruglikov and D. The in \cite{KT}. The dimension of the submaximal symmetry algebra for $4n$-dimensional almost quaternionic structures has been computed by the second author, B. Kruglikov and L. Zalabova in \cite{KWZ}, and for $2n$-dimensional almost c-projective structures by B. Kruglikov, V. Matveev and D. The in \cite{KMT} (both for $n>1$). In these papers the authors also consider the submaximal dimensions of sub-cases of the corresponding structures depending on the type of the Weyl curvature.

From \cite{BC} we know that the flat, hence maximally symmetric, model of quaternionic geometry can be constructed from the flat model of c--projective geometry. 
The current paper is motivated by the following two observations. First, we discovered that the model of submaximal quaternionic manifolds constructed in \cite{KWZ} admits an $S^1$ action of the type required in \cite{BC}, and that the restriction of the (class of quaternionic connections of the) quaternionic structure to the underlying maximal totally complex submanifold gives the c-projective type $(1,1)$ submaximally symmetric model (see \cite{KMT}).
This generalizes the flat case to show that at least one submaximal quaternionic model can be constructed from a submaximal c--projective model.
Secondly, we observed that the dimension of the symmetry algebra in the case of submaximal c-projective type $(1,1)$ is equal to $2n^2-2n+4$, while in the quaternionic case it is $2(2n^2-2n+4)+1$, where $n$ is the complex and quaternionic dimension respectively. This suggested that it may be possible to describe the algebra of quaternionic symmetries in terms of c-projective symmetries on a totally complex submanifold.

It turns out that any vector field preserving the initial data used in generalized Feix--Kaledin construction can be carried through the twistor construction. Therefore we obtain the following theorem. 

\begin{it}
Let $V$ be a symmetry of a real-analytic c-projective manifold $S$ with c-projective curvature of type $(1,1)$, and suppose that $V$ preserves a connection on a holomorphic line bundle on $S$ (associated with the holomorphic tangent bundle) with type $(1,1)$ curvature. Then $V$ extends from the submanifold $S$ to a quaternionic symmetry on the quaternionic manifold obtained by a generalized Feix--Kaledin construction from these data. 
\end{it}

For maximal totally complex submanifolds of quaternionic manifolds, there is a well defined normal subbundle of a tangent bundle along submanifolds given by the action of any anti-commuting (with the complex structure on the submanifold) almost complex structure. 
Because of the maximal and submaximal examples, we expected that at least in the hypercomplex case, any symmetry from the submanifold will give an `orthogonal' quaternionic symmetry which would explain the relation between c-projective and quaternionic submaximal symmetry dimension. However this is not true in general, as if this was the case then for $n=2$ the Calabi metric on the cotangent bundle of $\mathbb{CP}^n$ would have the dimension of quaternionic symmetries equal at least to the submaximal quaternionic dimension. This is not possible, as we prove that the Calabi metric on the cotangent bundle of $\mathbb{CP}^n$ has quaternionic symmetries dimension which is neither maximal nor submaximal. In fact by direct computations (using the DifferentialGeometry package in Maple) we show that for the case $n=1$ (which is the Eguchi-Hanson metric) the quaternionic symmetry algebra  is generated by the $S^1$ action and the symmetries extended from the c-projective symmetries on the submanifold, so we obtain no `orthogonal' symmetries in that case.

The structure of the paper is as follows. In Section \ref{back} we provide necessary background about c-projective and quaternionic structures and generalized Feix--Kaledin construction \cite{BC}. Additionally we recall some material about flows and complexification that will be necessary for our considerations. In Section \ref{model} we discuss the properties and relationship between the submaximal models of c-projective and quaternionic structures from \cite{KMT} and \cite{KWZ}. Moreover we show how the submaximally symmetric quaternionic model arises from the submaximally symmetric c-projective type $(1,1)$ model by the generalized Feix--Kaledin construction, which generalizes the corresponding statement for the maximally symmetric model. In Section \ref{c-p} we show how c-projective symmetries on the maximal totally complex submanifold of quaternionic manifold $M$, which is the fixed-point set of the quaternionic $S^1$ action with no tri-holomorphic points, extend to quaternionic symmetries on $M$. In Section \ref{quat} we further discuss the properties of the algebra of quaternionic symmetries of a hypercomplex manifold which admit the $S^1$ action with the conditions required in \cite{BC}. We prove that any submaximally symmetric quaternionic manifold admits such an action. Finally we study the quaternionic symmetries of the Calabi and Eguchi-Hanson structure. We end with conclusions and further directions.

\section{Background}\label{back}
 \subsection{Projective Cartan geometries and their symmetries}
 Recall that a projective structure on a manifold is an equivalence class of  torsion--free connections which have the same unparametrized geodesics. 
 A classical result shows that the connections in the projective class are parametrized by $1$-forms:
 $$D\sim_p D' \Leftrightarrow \exists_{\gamma}:\quad D'_YZ=D_YZ +\gamma(Y)Z+\gamma(Z)Y,$$
where $\gamma$ is a $1$-form.

C-planar curves are complex analogues of geodesics. These are curves which for a given connection $D$ satisfy the equation 
$$D_{\dot{c}}\dot{c}\in\spanu\{\dot{c},J(\dot{c})\},$$
where $J$ is an almost complex structure on a $2n$-manifold $S$. For the purpose of this paper we assume that $J$ is integrable. We define a c-projective class of connections on $(S,J)$ as those torsion--free complex connections ($DJ=0$) which have the same c-planar curves. Similarly to projective structures, the connections within the class are parametrised by $1$-forms in the following way:
 $$D\sim_c D' \Leftrightarrow \exists_{\gamma}:\quad D'_YZ=D_YZ +\frac{1}{2}(\gamma (Y)Z+\gamma (Z)Y-\gamma(JY)JZ-\gamma(JZ)JY).$$
 
 An almost quaternionic structure on a manifold is a rank $3$ subbundle $Q\subset \End (TM)$ which is locally generated by three anti-commuting almost complex structures $I,J,K$ satisfying the quaternionic relation $IJ=K$. We call an almost quaternionic structure integrable (or simply quaternionic) if there exists a torsion free connection which preserves the subbundle $Q$. In that case, we can define a quaternionic analogue of geodesics, called q-planar curves, which  satisfy the equation 
 $$D_{\dot{c}}\dot{c}\in\spanu\{\dot{c},Q(\dot{c})\}.$$
 It turns out that the class of torsion free connections which have the same q-planar curves coincides with the class of all (torsion-free) quaternionic connections (i.e.,$ DQ\subset Q$) and is parametrized by $1$-forms in the following way:
  $$D\sim_q D' \Leftrightarrow \exists_{\gamma}:\quad D'_YZ=D_YZ +\frac{1}{2}(\gamma (Y)Z+\gamma (Z)Y-\Sigma_{i=1}^3(\gamma(I_iY)I_iZ+\gamma(I_iZ)I_iY))$$ where $I_1,I_2,I_3$ is a pointwise frame (see \cite{Alex}).

Hypercomplex manifolds are a special case of quaternionic manifolds, i.e. the quaternionic manifolds for which $Q$ is trivial and generated by integrable sections $I,J,K$. Then there exists a preferred connection in the class of quaternionic connections, called the Obata connection, with the property
$$DI=DJ=DK=0.$$
Such a connection is unique for a given hypercomplex structure. Note however, that a quaternionic structure may admit more than one non-equivalent hypercomplex structure. If a hypercomplex manifold admits a metric which is K\"ahler with respect to $I$,$J$ and $K$, then the manifold is called hyperk\"ahler.

Projective, c-projective and quaternionic structures are examples of parabolic Cartan geometries with abelian nilradical. For such geometries one can consider an underlying Weyl (or harmonic) curvature which in our cases is just the part of the curvature of the connections which is invariant under a change of connections in the class. One can show that for any connection, the remaining part of the curvature comes from the Ricci tensor (which for each case is normalized differently), therefore if the class of connections admits a Ricci flat connection, then the Weyl curvature coincides with the curvature of the connection (see \cite{Alex} for the quaternionic case). 

Parabolic geometries have finite dimensional algebras of infinitesimal symmetries (i.e. of vector fields with flows that preserve the structure) which, for fixed manifold dimension, is of maximal dimension for the corresponding flat structure. Therefore it makes sense to study the maximal dimension (for fixed dimension of the underlying manifold) of the algebra of infinitesimal symmetries in the non-flat case, which is called the submaximal dimension. It is usually much smaller than the dimension in the flat case, and this observation is called the gap phenomenon for parabolic geometries. The general theory was provided in \cite{KT} and the specific cases of c-projective and quaternionic geometries have been studied in \cite{KMT} and \cite{KWZ}, and we will now briefly recall the results obtained there.

\begin{definition}
Let $(S,J,[D]_c)$ be a c-projective $2n$-manifold. A vector field $V$ is an infinitesimal c-projective symmetry if the Lie derivatives along $V$ of $J$ and of the class $[D]_c$ vanish, or equivalently if $[\Ph ]_* J=J$ and for any $D\in[D]_c$ there exists $D'\in[D]_c$ such that $[\Ph] ^*D=D'$, where $\Ph$ is the local flow of $V$.
\end{definition}

If the c-projective structure is minimal (which is related to the notion of minimal torsion and always holds in the torsion-free case which we are interested in) then the Weyl curvature decomposes into three pieces: type I, type II and type III, where type I is the $(2,0)$ part of the tensor, type II is the $(1,1)$ part and type III does not appear in the torsion-free case. A necessary condition for the minimal c-projective structure to be submaximal is that its curvature is supported purely in one of these types. Note that the Feix--Kaledin construction deals only with manifolds with Weyl curvature of type II. 
In \cite{KMT}, an explicit model of non-flat c-projective structure with type II Weyl curvature which admits the (sub--) maximal dimension of infinitesimal c-projective symmetries is given for each dimension greater than $2n=2$. The symmetry dimension is $2n^2-2n+4$, and this turns out to be equal to the submaximal dimension for general c-projective structures, except for the case $2n=4$. Moreover, the submaximal type II structure turns out to be unique and we will discuss some of its properties in detail in Section \ref{model}.

\begin{definition}
Let $(M,Q,[D]_q)$ be a quaternionic manifold.  A vector field $V$ is an infinitesimal quaternionic symmetry if the Lie derivative preserves the space of (local) sections $\Gamma Q$, or equivalently if for any (local) almost complex structure $I_i\in \Gamma Q$ there exists a (local) almost complex structure $I_j\in \Gamma Q$ such that $[\Ph ]_* I_i=I_j$. In that case, the flow of $V$ also preserves the class of quaternionic connections  i.e., for any $D\in[D]_q$ there exists $D'\in[D]_q$ such that $[\Ph] ^*D=D'$.
\end{definition}
Unlike in the c-projective case, for almost quaternionic structures we do not need to deal with minimal and non-minimal cases. This is because the quaternionic structure is defined by the subbundle $Q$, and the class of minimal almost quaternionic connections (which for quaternionic structures is the class of torsion--free quaternionic connections) is determined (see \cite{Alex}). The Weyl curvature decomposes into two pieces, one of which corresponds to the torsion part, and therefore will not be considered here. The submaximal dimension for quaternionic  $4n$ manifolds is equal to $4n^2-4n+9$ for $n>1$, and a locally hypercomplex submaximal model is given in \cite{KWZ}. We will discuss the properties of this model in Section \ref{model}. Note, however, that it is not known if the submaximal quaternionic model is unique.
\subsection{Tractor calculus}\label{trac}
Tractor calculus is an important tool in theory of parabolic Cartan geometries (\cite{CEMN},\cite{CS} ). In this section we will briefly recall the results that will be needed for our considerations. 
\begin{definition}\label{o}
Let $(S,J)$ be a complex manifold of real dimension $2n$. Then locally we define the holomorphic line bundle $\mathcal{O}(1)$ on $S$ as the $(n+1)$-st root of  the bundle $\Lambda^nT^{1,0}S$. If $k=\frac{p}{q}$ is a rational number then by $\mathcal{O}(k)$ we denote $p$-th power of the $q(n+1)$-st root the bundle $\Lambda^nT^{1,0}S$ and by $\mathcal{O}(-k)$ we denote its dual.
\end{definition}
\begin{remark}
Locally the bundle from Definition \ref{o} always exists. If $(S,J)$ is the complex projective n-space, then $\mathcal{O}(1)$ exists globally and is equal to the dual to the tautological line bundle, hence the notation is compatible with the standard notation for line bundles over $\mathbb{CP}^n$ in algebraic geometry.
\end{remark}
On the bundle $j^1\mathcal{O}(1)$ of $1$-jets of $\mathcal{O}(1)$ there are linear connections associated to holomorphic projective and c-projective Cartan connections. This is because $j^1\mathcal{O}(1)$  is the co-tractor bundle for holomorphic projective and c-projective Cartan geometries and the underlying linear connections are co-tractor connections. The co-tractor connections have the important property that they encode the properties of the Cartan connections - in fact they can be used as an alternative description of parabolic Cartan geometries. As a consequence, the Weyl curvature of the c-projective structure is of type $(1,1)$ if and only if the curvature of the c-projective co-tractor connection is of type $(1,1)$. Also the co-tractor connection is flat if and only if the underlying structure is flat (i.e., its Weyl curvature vanishes). A choice of connection $D$ in a c-projective class (or in a projective class respectively) gives a splitting of the $1$-jet sequence. Then $j^1\mathcal{O}(1)\cong_D \mathcal{O}(1)\oplus T^{1,0}S\otimes \mathcal{O}(1)$, and the explicit formulas for c-projective and holomorphic projective co-tractor connections is   
$$\mathcal{D}^D_Y\left(\begin{array}{c}l\\\alpha\end{array}\right)_D=\left(\begin{array}{c}D_Yl-\alpha(Y)\\ D_Y\alpha+(\ri^D)_Yl\end{array}\right)_D,$$
where $\ri^D$ is a particular normalization of the Ricci tensor, different for projective and c-projective structures. We denote by $ \ri_p^D$ the normalized Ricci tensor in the projective case and by $\ri_c^D$ the normalized Ricci tensor in the c-projective case.

\subsection{Twistor geometry of quaternionic manifolds}\label{twist}
Let $\mathcal{Q}\subset Q$ be the fibre bundle over an almost quaternionic manifold $M$ consisting of those endomorphisms that square to $-\id$. If locally $I,J,K$ are three anti-commuting almost complex structures satisfying the quaternionic relations 
and generating $Q$ then $\mathcal{Q}=\{a_1I+a_2J+a_3K;\ (a_1,a_2,a_3)\in S^2\}$, hence the fibres of $\mathcal{Q}$ are $2$-spheres or equivalently complex projective lines. The total space of $\mathcal{Q}$ is called the twistor space of $M$ and is denoted by $Z$ (see \cite{Sal}). It carries a naturally induced almost complex structure which is integrable precisely when $M$ is quaternionic. The fibres of $\mathcal{Q}$ are holomorphic projective lines in $Z$, called twistor lines, with normal bundle isomorphic to $\mathcal{O}(1)\otimes\mathbb{C}^{2n}$. $Z$ also carries a real structure $\tau$, which for fixed fibre $x$ of $Q$ maps an almost complex structure at $x$ to its opposite. By definition the real structure is invariant on the fibres of $\mathcal{Q}$, and its restriction to any of them is equal to the antipodal map on $\mathbb{CP}^1$. 
It turns out that there is a one-to-one correspondence between quaternionic manifolds and their twistor spaces:
\begin{theorem}[\cite{PP}]
Let $Z$ be a holomorphic manifold of complex dimension $2n+1$ equipped with a real structure $\tau$. Suppose, moreover, that $Z$ contains a smooth projective line with a normal bundle isomorphic to  $\mathcal{O}(1)\otimes\mathbb{C}^{2n}$ which is $\tau$-invariant and on which $\tau$ does not have any fixed point. Then the space of all $\tau$-invariant lines with normal bundle $\mathcal{O}(1)\otimes\mathbb{C}^{2n}$ on which $\tau$ has no fixed points  (called real twistor lines) is a quaternionic manifold of real dimension $4n$ such that $Z$ is locally isomorphic to its twistor space.
\end{theorem}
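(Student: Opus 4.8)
The plan is to realise $M$ as a Kodaira moduli space of rational curves in $Z$ and then to extract a torsion-free quaternionic connection from the integrability of the complex structure $J_Z$; this is the quaternionic incarnation of the Penrose--Ward twistor correspondence. Let $\ell_0\subset Z$ be the given $\tau$-invariant line with normal bundle $N\cong\mathcal{O}(1)\otimes\mathbb{C}^{2n}$. Since $H^1(\mathbb{CP}^1,\mathcal{O}(1))=0$ we have $H^1(\ell_0,N)=0$, so by Kodaira the deformations of $\ell_0$ are unobstructed and form a complex manifold $M^{\mathbb{C}}$ with $T_{\ell_0}M^{\mathbb{C}}\cong H^0(\ell_0,N)$. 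As $H^0(\mathbb{CP}^1,\mathcal{O}(1))\cong\mathbb{C}^2$, this is $\mathbb{C}^2\otimes\mathbb{C}^{2n}$, so $\dim_{\mathbb{C}}M^{\mathbb{C}}=4n$. The real structure $\tau$ induces an antiholomorphic involution on $M^{\mathbb{C}}$, and I would take $M$ to be its fixed locus, the real twistor lines, which is a smooth totally real submanifold of real dimension $4n$ near $\ell_0$.

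To build the almost quaternionic structure, for each $z\in\ell_0$ consider the evaluation $\mathrm{ev}_z\colon H^0(\ell_0,N)\to N_z$; it is onto with $2n$-dimensional kernel $V_z$ of sections vanishing at $z$, equal to $\ell_z\otimes\mathbb{C}^{2n}$ for the line $\ell_z\subset\mathbb{C}^2$ of forms vanishing at $z$ under the splitting $H^0(\ell_0,N)=H^0(\mathcal{O}(1))\otimes\mathbb{C}^{2n}$. Since $\tau$ has no fixed point, $z$ and $\tau(z)$ are distinct, so $V_z$ and $\overline{V_z}=V_{\tau(z)}$ are complementary in $T_{\ell_0}M^{\mathbb{C}}$; declaring $V_z$ to be the $(+i)$-eigenspace determines a complex structure $I_z$ on the real tangent space $T_mM$, and as $z$ ranges over $\ell_m\cong\mathbb{CP}^1$ these form a two-sphere of anti-commuting complex structures. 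The fixed-point-free real structure makes $\mathbb{C}^2=H^0(\mathcal{O}(1))$ a quaternionic line, so $T_mM\cong\mathbb{H}^n$ and the $S^2$ is the set of unit imaginary quaternions acting by left multiplication; this is the rank-$3$ subbundle $Q\subset\End(TM)$, and tautologically $Z$ is fibrewise identified with $\mathcal{Q}$.

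The crux, and the step I expect to be hardest, is integrability: producing a torsion-free connection preserving $Q$, equivalently promoting $(M,Q)$ to a genuine quaternionic manifold. The integrability of $J_Z$ must enter essentially here, since the converse direction (Salamon's result \cite{Sal} that a quaternionic $M$ has integrable twistor space) cannot be cited. I would pass to the correspondence space $\mathcal{F}=\mathcal{Q}$ with its double fibration $M\leftarrow\mathcal{F}\rightarrow Z$ and analyse how $V_z$ varies with both $m$ and $z$: differentiating the incidence relation and using holomorphy of the twistor lines shows that this variation is controlled by a connection whose torsion is measured by a component of the Nijenhuis tensor of $J_Z$, which vanishes because $Z$ is complex. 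This forces the existence of the desired torsion-free quaternionic connection, exactly as in the nonlinear-graviton argument; the Nijenhuis bookkeeping is the technical heart. Finally, the local isomorphism $Z\cong$ twistor space of $M$ follows by reversing the construction: the $\tau$-invariant curves through a neighbourhood recover $Z$ as their incidence space, and the complex structure induced on $\mathcal{Q}$ agrees with $J_Z$ near $\ell_0$.
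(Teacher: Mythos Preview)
The paper does not prove this theorem: it is quoted as a background result with attribution to \cite{PP} (Pedersen--Poon) and no proof is given. Your outline is the standard Kodaira-deformation argument that appears in \cite{PP} and related treatments, so in that sense you are following the ``same'' route as the literature the paper defers to. The steps you give---unobstructedness of deformations from $H^1(\ell_0,N)=0$, identification $T_mM\cong H^0(\ell_0,N)\cong\mathbb{C}^2\otimes\mathbb{C}^{2n}$, the $S^2$ of complex structures from the evaluation kernels $V_z$, and the reduction of integrability to the vanishing of the Nijenhuis tensor of $J_Z$---are all correct in spirit.

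One point worth sharpening: your description of the integrability step is still only a sketch (``the Nijenhuis bookkeeping is the technical heart'') and does not actually produce the torsion-free connection. In the standard argument one does not directly extract a connection from the Nijenhuis tensor; rather one shows that the family of subspaces $V_z$ defines an integrable holomorphic distribution on the correspondence space, whose leaves project to the fibres of $Z\to M$ in one direction and give the twistor projection in the other, and then uses the Ward correspondence (or equivalently the identification of $Q$-preserving connections with holomorphic structures on the contact line bundle) to obtain the quaternionic class. As written, your last paragraph gestures at this but does not pin down which object carries the connection or why the torsion vanishes. Since the paper itself simply cites \cite{PP}, this is not a discrepancy with the paper, but if you intend this as a self-contained proof you should fill in that step.
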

It is possible to read off properties of quaternionic manifolds from the properties of their twistor spaces. In particular quaternionic vector fields (symmetries) correspond to holomorphic vector fields on the twistor space which are $\tau$-invariant and transversal to real twistor lines. 

If $Z$ is the twistor space of a hypercomplex manifold then it additionally admits a holomorphic projection to $\mathbb{CP}^1$. The fibres of the projection are given by the distribution induced on $Z$ (as a bundle over $M$) by the Obata connection.

 \subsection{Flows and complexifications}\label{comp}
 In this section we will recall some facts regarding flows and complexifications. Although the results are classical, in case of complexifications, they are very technical so for reference one may see \cite{Biel}, \cite{BC}.
 
 Let $V$ be a vector field on a manifold $S$ and  $\Phi_t^V$ its local flow. For a fixed $x\in S$ we can find a neighbourhood $U_x$ of $x$ and $\epsilon$ such that  $\Phi_t^V$ is defined for any $|t|<\epsilon$. Then for any $|t|<\epsilon$ we have that  $\Phi_t^V|_{U_x}$ is an isomorphism onto its image. As a consequence for any $|t|<\epsilon$ it gives an isomorphism of $T_xS$ and $T_{\Phi_t^V(x)}S$ (this is really just the push--forward and therefore we denote it by $(\Phi_t^V)_*)$. Using this we can define a natural flow on $TS$ and on any associated bundle, with the property that its bundle projection is equal to $\Phi_t^V$, by the formula:
 $$\Phi_t^{V^c}(x,v)=(\Phi_t^V(x),(\Phi_t^V)_*(v)).$$
 
Now recall that a complexification $S^c$ of a real-analytic $n$-manifold $S$ is a complex manifold $S^c$ of a real dimension $2n$ admitting a real structure $\theta$ (i.e. an anti-holomorphic involution) whose fixed points set is analytically diffeomorphic to $S$. One obtains complexifications of real analytic manifolds by using holomorphic extensions of real analytic coordinates. If we assume that $S$ admits an integrable complex structure $J$, then $S^c$ carries two transverse foliations (defined by the fact that the subbundles $T^{1,0}S$ and $T^{0,1}S$ of the complexified tangent bundle are integrable) by holomorphic leaves which are biholomorphic to $(S,J)$ and $(S,-J)$ respectively. We denote them $(1,0)$ and $(0,1)$-foliations. In this case there is a natural model for $S^c$ equal to the product $S\times \overline{S}$, where the real structure is the map $(x,y)\mapsto (y,x)$ and the fixed points set $S_{\mathbb{R}}$ is the diagonal. 

If $\mathcal{L}$ is a holomorphic rank $k$ bundle over $S$ then we can also consider a complexification $\mathcal{L}^c$ over $S^c$ which is a rank $2k$ bundle. Then $\mathcal{L}^c=\mathcal{L}^{1,0}\oplus\mathcal{L}^{0,1}$, where $\mathcal{L}^{1,0}$ is trivial along the leaves of the $(0,1)$ foliation (and hence it is a pull back of a bundle over $S$ - which happens to be $\mathcal{L}$) and $\mathcal{L}^{1,0}$ is trivial along the leaves of the $(1,0)$ foliation (and it is a pull back of $\overline{\mathcal{L}}$ over $\overline{S}$).

If $D$ is a connection on $S$ then (using holomorphic extensions of connection forms) we can extend it as a holomorphic connection to $S^c$, but only locally near $S_{\mathbb{R}}$ - so if we use the model $S\times\overline{S}$, the connections from $S$ extend only in a neighbourhood of the diagonal. As a consequence, if $S$ is equipped with a real-analytic c-projective structure, then $S^c$ near $S_{\mathbb{R}}$ is equipped with a holomorphic  c-projective structure (which is the structure defined analogously to a c-projective structure, but with the requirement that all connections are holomorphic).

Let $\nabla$ be a real-analytic connection on $\mathcal{L}$ which is compatible with the holomorphic structure and let $\nabla^c$ be its complexification using holomorphic extensions. Then $\nabla^c$ induces connections on $\mathcal{L}^{1,0}$ and $\mathcal{L}^{1,0}$ which are trivial along the leaves of the $(0,1)$ and $(1,0)$ foliations respectively.

The last object that we will need to complexify is a real-analytic vector field on $S$. Again using real analytic extensions we can extend this to a holomorphic vector field on $S^c$. Note that by definition this vector field preserves the two foliations on $S^c$ coming from the complexification.
 \subsection{The Generalized Feix--Kaledin construction} \label{FK}
 One of the first nontrivial examples of hyperk\"ahler metrics is Calabi metric on the cotangent bundle of the complex projective $n$-space. This motivated the question of existence of hyperk\"ahler metrics on the cotangent bundle of a K\"ahler manifold. Using twistor methods, B. Feix in \cite{Feix} provided a construction of such a metric on a neighbourhood of the zero section of the cotangent bundle.
 Generalizations of this construction were given by Feix for the hypercomplex case, where the hypercomplex structure is constructed on a neighbourhood of the zero section of the tangent bundle of a real-analytic complex manifold with a real-analytic complex connection with type $(1,1)$ curvature (see \cite{Feix2}), and by the first author and D. Calderbank for the quaternionic case (see \cite{BC}). We call the construction from \cite{BC} the generalized Feix--Kaledin construction. In all cases, the obtained manifolds admit a quaternionic $S^1$ action such that its fixed point set is the initial complex manifold. One can show that the $S^1$ action has an additional property: it has no triholomorphic points, which means that for any point $x$, the natural action that it induces on $\mathfrak{sp}(1)\subset Q_x\subset \mathfrak{gl}(T_xM)$ is non-trivial. Conversely, any hypercomplex or quaternionic manifold admitting such an $S^1$ action  can be obtained in a neighbourhood of the fixed points set by the constructions, provided that the fixed point set has maximal dimension (i.e. its dimension is  equal to the half of the dimension of hypercomplex/quaternionic manifold). Now we will give the details of the generalized Feix--Kaledin construction, as this will be necessary for our considerations. 
Let $(S,J,[D]_c)$ be a real analytic complex $2n$-manifold with a real analytic torsion-free c-projective structure $[D]_c$ such that its c-projective Weyl curvature is of type $(1,1)$. Moreover, let $\mathcal{L}$ be a holomorphic line bundle on $S$ with a real analytic connection $\nabla$ which is compatible with the holomorphic structure and with curvature of type $(1,1)$.

   The construction can be divided into the following steps.
 \begin{itemize}
 \item We complexify $S$ and all structures on $S$. As we discussed in Section \ref{comp}, $S^c$ carries two holomorphic transverse foliations such that the leaves are biholomorphic to $S=(S,J)$ and $\overline{S}=(S,-J)$ respectively. One can show that along leaves, the complexified c-projective structure gives flat projective structures. We also complexify $\nabla$, obtaining connections on  $\mathcal{L}^{1,0}$ and $\mathcal{L}^{1,0}$ which are trivial along the leaves of the $(0,1)$ and $(1,0)$-foliations respectively and flat along  the leaves of the $(1,0)$ and $(0,1)$-foliations.
 \item We define two holomorphic rank $n+1$ bundles over the leaf spaces $S^{0,1}\cong \overline{S}$ and $S^{1,0}\cong S$ of the $(1,0)$ and $(0,1)$-foliations by requiring that the fibre over a leaf $x$ is equal to the space of flat sections of the tensor product connection of the projective co-tractor connection (see Section \ref{trac}) on $x$ and the connection induced by $\nabla^c$ on $\mathcal{L}^{1,0}$ and $\mathcal{L}^{0,1}$ respectively. We call these the  bundles of affine sections of $\mathcal{L}^{1,0}(1)$ and $\mathcal{L}^{0,1}(1)$ respectively (as the parallel sections of the corresponding twisted co-tractor bundles correspond to special sections of  $\mathcal{L}^{1,0}(1)$ and $\mathcal{L}^{0,1}(1)$) and denote them by $\mathcal{A}^{1,0}$ and $\mathcal{A}^{0,1}$ respectively. We denote 
$$\mathcal{V}^{1,0}:=  [\mathcal{A}^{1,0}\otimes\mathcal{L}^{0,1}]^*,\ \ \ \ \ \ \ \mathcal{V}^{0,1}:=  [\mathcal{A}^{0,1}\otimes\mathcal{L}^{1,0}]^*.$$
 \item We consider evaluation maps $\phi_{1,0}:\ \mathcal{L}^{0,1}(1)\otimes [\mathcal{L}^{1,0}(1)]^*\rightarrow \mathcal{V}^{1,0}$ defined by 
 $$(x,y,l)\mapsto [y, s\mapsto s(x)\otimes l]$$ and $\phi_{0,1}:\ \mathcal{L}^{1,0}(1)\otimes [\mathcal{L}^{0,1}(1)]^*\rightarrow \mathcal{V}^{0,1}$ defined by 
 $$(x,y,l)\mapsto [x, s\mapsto s(y)\otimes l],$$ which are isomorphisms away from the zero sections.
 \item We observe that  $\mathcal{L}^{0,1}(1)\otimes [\mathcal{L}^{1,0}(1)]^*$ and $\mathcal{L}^{1,0}(1)\otimes [\mathcal{L}^{0,1}(1)]^*$ embed as complementary affine subbundles into the projective bundle $\mathbb{P}(\mathcal{L}^c)$, and use this along with the maps $\phi_{1,0}$ and  $\phi_{0,1}$ to glue the total spaces of $\mathcal{V}^{1,0}$ and $\mathcal{V}^{0,1}$. The resulting manifold is not Hausdorff.
\item The projective bundle  $\mathbb{P}(\mathcal{L}^c)$ admits a natural real structure given by complexification:  By the properties of complexification, for any $z\in S^c$  the fibre  $\mathcal{L}^{0,1}(1)\otimes [\mathcal{L}^{1,0}(1)]^*_z$ is naturally isomorphic to $\overline{\mathcal{L}^{0,1}(1)\otimes [\mathcal{L}^{1,0}(1)]^*}_{\theta{z}}$. If we compose this isomorphism with $-\id$ we obtain a real structure $\tau$ on $\mathbb{P}(\mathcal{L}^c)$ which does not have fixed points and is invariant on the fibres over $S_{\mathbb{R}}$. On the other hand, the natural real structure coming from complexification induces an anti-holomorphic isomorphism between  $\mathcal{V}^{1,0}$ and  $\mathcal{V}^{0,1}$. After composition with $-id$, this is compatible (via the maps $\phi_{1,0}$ and  $\phi_{0,1}$) with $\tau$ and hence we obtain a real structure on the gluing of $\mathcal{V}^{1,0}$ and $\mathcal{V}^{0,1}$, which also does not have fixed points. We denote it also by $\tau$.
 \item We choose open submanifolds of total spaces of $\mathcal{V}^{1,0}$ and $\mathcal{V}^{0,1}$ such that their union contains  the image of $\mathbb{P}(\mathcal{L}^c)|_{S_{\mathbb{R}}}$, is invariant under the real structure and is Hausdorff. Hence we obtain a holomorphic manifold $Z$ of dimension $2n+1$ with a real structure which has no fixed points.
 \item It turns out that the normal bundle to the images of fibres of $\mathbb{P}(\mathcal{L}^c)|_{S_{\mathbb{R}}}$ is isomorphic to $\mathcal{O}(1)\otimes\mathbb{C}^{2n}$. Since the images of the fibres are projective lines, which are invariant under the real structure, it follows that they are twistor lines and $Z$ is a twistor space of a quaternionic manifold $M$ (note that the image of an arbitrary fibre of  $\mathbb{P}(\mathcal{L}^c)$ is also a twistor line but not real).
 \item As $M$ is the space of all real twistor lines in $Z$, it contains the space $S=S_{\mathbb{R}}$ of real twistor lines given by images of fibres of $\mathbb{P}(\mathcal{L}^c)|_{S_{\mathbb{R}}}$. One can show that $S$ is totally complex in $M$ and that the quaternionic structure induces the c-projective structure $[D_c]$ on $S$.
 \item The twistor space $Z$ admits a holomorphic $\mathbb{C}^*$ action given by scalar multiplication (and its inverse) in  $\mathcal{V}^{1,0}$ and $\mathcal{V}^{0,1}$ which is transversal to images of fibres of $\mathbb{P}(\mathcal{L}^c)|_{S_{\mathbb{R}}}$. As for  $S^1\subset \mathbb{C}^*$, the action maps real twistor lines to real twistor lines, so it follows that it corresponds to a quaternionic $S^1$ action on $M$ on some neighbourhood of $S$ and moreover, we have that $S$ is its fixed points set.
 \item If we set $\mathcal{L}=\mathcal{O}(-1)$ with $\nabla$ induced by some connection $D\in [D]_c$ with curvature of type $(1,1)$, then the construction coincides with the standard Feix construction \cite{Feix}, hence $M$ is hypercomplex (but note that the $S^1$ action is not, as it is not triholomorphic). 
 
  \end{itemize}

Let us now summarize the results obtained in \cite{BC} concerning the construction described above.
 \begin{theorem}\cite{BC}
The space $Z$ constructed by the generalized Feix--Kaledin construction is the twistor space of a quaternionic $4n$-manifold $M$ such that $M$ admits a quaternionic $S^1$ action which has no triholomorphic points, and whose fixed point set is $S\subset M$. Moreover, the c-projective structure on $S$ used in the construction is the restriction of the quaternionic connections of the quaternionic structure on $M$. Conversely, any quaternionic $4n$-manifold admitting a quaternionic $S^1$ action with no triholomorphic points and with fixed point set of dimension $2n$ can be obtained locally near $S$ by the generalized Feix--Kaledin construction for some ($\mathcal{L},\nabla$).
\end{theorem}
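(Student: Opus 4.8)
The statement collects the main results of \cite{BC}, and the plan is to organise the proof around the twistor correspondence of \cite{PP} recalled above. For the forward direction I would verify, step by step, that the holomorphic manifold $Z$ produced by the bullet-point construction meets the hypotheses of that correspondence, then read off the $S^1$ action and the submanifold $S$ from the residual $\mathbb{C}^*$-action and the distinguished family of real twistor lines. For the converse I would run the construction in reverse: starting from a quaternionic $M$ with an $S^1$ action of the stated type, pass to its twistor space, complexify the action to a local $\mathbb{C}^*$-action, and reconstruct the pair $(\mathcal{L},\nabla)$ from the weight decomposition near the fixed locus.

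In more detail, the forward direction breaks into: (i) a dimension count --- $\mathcal{A}^{1,0}$ and $\mathcal{A}^{0,1}$ are holomorphic rank $n+1$ bundles over the $n$-dimensional leaf spaces, so $\mathcal{V}^{1,0},\mathcal{V}^{0,1}$ have rank $n+1$ and their total spaces have complex dimension $2n+1$; (ii) checking that the gluing by the maps $\phi_{1,0},\phi_{0,1}$ (biholomorphisms off the zero sections) followed by passage to a Hausdorff, $\tau$-invariant open piece yields a genuine holomorphic $(2n+1)$-manifold $Z$ carrying a fixed-point-free real structure $\tau$; (iii) computing the normal bundle in $Z$ of the image of a fibre of $\mathbb{P}(\mathcal{L}^c)|_{S_{\mathbb{R}}}$ and showing it is $\mathcal{O}(1)\otimes\mathbb{C}^{2n}$; (iv) invoking \cite{PP} to produce the quaternionic $4n$-manifold $M$ with $Z$ locally its twistor space; (v) observing that fibrewise scaling on $\mathcal{V}^{1,0}$ and $\mathcal{V}^{0,1}$ defines a holomorphic $\mathbb{C}^*$-action on $Z$, transverse to the distinguished real twistor lines and (for $S^1\subset\mathbb{C}^*$) $\tau$-compatible, hence descending to a quaternionic $S^1$-action on a neighbourhood of $S$ in $M$ whose fixed locus is exactly $S$ (the glued zero sections), with no triholomorphic points because on each such twistor line $\cong\mathbb{CP}^1$ the action is the standard rotation fixing the two chart points and so acts nontrivially on $\mathfrak{sp}(1)\subset Q_x$; and (vi) checking that $S$ is (maximal) totally complex in $M$ and that the induced c-projective class is the original $[D]_c$ --- this last point because the co-tractor connection used to define $\mathcal{A}^{1,0},\mathcal{A}^{0,1}$ was built from the complexification of $[D]_c$, so restricting the twistor data along a real line over $S_{\mathbb{R}}$ returns it.

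The technical core --- and the step I expect to be the main obstacle --- is (iii): the normal-bundle computation together with the bookkeeping of the twists $\mathcal{L}^{1,0},\mathcal{L}^{0,1}$ and the $\mathcal{O}(k)$-weights. One has to split the normal bundle along a line using the two affine charts $\mathcal{V}^{1,0},\mathcal{V}^{0,1}$, compute the transition over their overlap through $\phi_{1,0},\phi_{0,1}$, and verify that the line-bundle and co-tractor twists conspire to give precisely $\mathcal{O}(1)$ in each of the $2n$ transverse directions (rather than, say, a mixture of $\mathcal{O}(2)$ and $\mathcal{O}(0)$), and that $\tau$ composed with $-\mathrm{id}$ really is fixed-point-free on these lines. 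Once this is in hand the rest is mostly unwinding definitions.

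For the converse, given $(M,Q,[D]_q)$ with a quaternionic $S^1$-action with no triholomorphic points and $2n$-dimensional fixed-point set $S$, I would lift the action to the twistor space $Z$, where it becomes holomorphic and, locally, extends to a $\mathbb{C}^*$-action transverse to real twistor lines. The fixed locus of $\mathbb{C}^*$ in $Z$ together with the attracting and repelling loci of the action recovers a complex manifold playing the role of $S^c$ with its two transverse foliations (so $S$ acquires a c-projective structure, which is $[D]_c$ by the forward analysis), while the $\mathbb{C}^*$-weight decomposition exhibits $Z$ near the real lines over $S$ as the gluing of two affine line bundles; identifying one of these as $\mathbb{P}(\mathcal{L}^c)$ with a section removed pins down $\mathcal{L}$ on $S$, and the holomorphic connection data transverse to the fixed locus supplies $\nabla$ with type $(1,1)$ curvature. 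Feeding this $(\mathcal{L},\nabla)$ back into the forward construction must then reproduce $Z$, hence $M$, on a neighbourhood of $S$; and the special choice $\mathcal{L}=\mathcal{O}(-1)$ with $\nabla$ induced by a connection in $[D]_c$ recovers Feix's original construction, giving the hypercomplex refinement noted in the last bullet point.
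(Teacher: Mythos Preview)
The paper does not prove this theorem at all: it is stated as a cited background result from \cite{BC}, summarising the output of the bullet-point description of the generalized Feix--Kaledin construction that immediately precedes it. There is no proof environment attached to it in the paper, and none of the subsequent sections return to establish it; the authors simply quote it and move on to use it. So there is nothing in the paper to compare your proposal against.

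That said, your sketch is a faithful outline of how the argument actually runs in \cite{BC}, and it tracks the bullet points in Section~\ref{FK} closely. Your identification of step~(iii), the normal-bundle computation $\mathcal{O}(1)\otimes\mathbb{C}^{2n}$, as the technical heart is correct, and your plan for the converse via lifting the $S^1$-action to a local holomorphic $\mathbb{C}^*$-action on the twistor space and reading off $(\mathcal{L},\nabla)$ from the weight decomposition is the right strategy. If you were asked to supply a proof here, what you have written would be an appropriate roadmap; but for the purposes of this paper the correct response is simply to cite \cite{BC}.
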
 
 
 \section{A Motivating Example}\label{model}
 In \cite{BC} it was shown that the flat quaternionic structure can be constructed by the standard Feix construction from the flat c-projective structure. These happen to be the standard structures on either quaternionic-- or complex projective space, and they both realize maximal symmetry dimension in their respective categories.
The fact that a maximal model can be constructed from a maximal model is the first sign that the Feix construction, and more generally the generalized Feix--Kaledin construction, is affected by the presence of infinitesimal symmetries of the involved geometric structures.

 In this section we will generalize this result by showing that one may similarly construct a submaximally symmetric quaternionic structure from the submaximally symmetric C-projective structure with curvature of type $(1,1)$, for some line bundle and connection.
We begin by studying the properties of the submaximal type 2 c-projective structure from \cite{KMT}. Recall that such a structure (for each $n>1$) is unique and in local coordinates $z^i, \overline{z^i}$ on $\mathbb{C}^n$ it is given by (the c-projective class of) a complex connection with the only non-vanishing Christoffel symbols:
$\Gamma^2_{11}=\overline{z^1}$ and $\Gamma^{\overline{2}}_{\overline{1}\overline{1}}$.

 \begin{proposition}\label{cprojconunique}
The submaximal c-projective model of curvature type (1,1) admits a unique invariant connection $D_0$.  
\end{proposition}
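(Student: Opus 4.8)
The plan is to exploit the fact that a c-projective symmetry algebra acting on $(S,J,[D]_c)$ permutes the connections in the class, so an invariant connection is a fixed point of an affine action of the symmetry group on the space of connections in $[D]_c$. Since any two connections in the c-projective class differ by a $1$-form $\gamma$ via the formula $D'_YZ=D_YZ+\tfrac12(\gamma(Y)Z+\gamma(Z)Y-\gamma(JY)JZ-\gamma(JZ)JY)$, the affine space of connections in $[D]_c$ is modelled on the space of (real) $1$-forms $\Omega^1(S)$, and a symmetry $V$ acts on this space by $\gamma\mapsto \Phi^{V*}\gamma+\gamma_V$ for a suitable cocycle $\gamma_V$ determined by how $V$ fails to preserve a chosen background connection. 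An invariant connection exists exactly when this affine action has a fixed point, and is unique exactly when the only $1$-form fixed by the linear part (i.e.\ the only $V$-invariant $1$-form for all symmetries $V$) is zero. So the proposition reduces to two things: (i) existence of at least one invariant connection, and (ii) the statement that the submaximal symmetry algebra of this model admits no nonzero invariant $1$-form.

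First I would write down the symmetry algebra explicitly. Starting from the model with the only nonvanishing Christoffel symbols $\Gamma^2_{11}=\overline{z^1}$ and its conjugate $\Gamma^{\overline2}_{\overline1\overline1}=z^1$, one can read off (or quote from \cite{KMT}) a basis of the $2n^2-2n+4$-dimensional c-projective symmetry algebra as explicit polynomial vector fields on $\mathbb{C}^n$. For existence in (i), the natural candidate is $D_0$ itself, the connection with those Christoffel symbols: I would check directly that $\mathcal{L}_V D_0=0$ for each basis symmetry $V$, i.e.\ that the Lie derivative of the Christoffel array vanishes rather than merely landing in the c-projective class. Because $D_0$ is a distinguished representative (it is, up to the standard normalisation, essentially the unique ``polynomial'' or ``minimal'' connection compatible with the coordinate description), this computation should be short. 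Alternatively, existence of an invariant connection follows abstractly from reductivity/averaging if the symmetry group were compact, but here it is not, so the direct check is cleaner.

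For uniqueness, the key step is to show that there is no nonzero $1$-form $\gamma$ on (an open subset of) $\mathbb{C}^n$ with $\mathcal{L}_V\gamma=0$ for all symmetries $V$ in the algebra. I would argue this by using the subalgebra of \emph{linear} and \emph{translational} symmetries: translations in the ``generic'' directions force $\gamma$ to have constant coefficients after restricting to a suitable affine subspace, and then the semisimple/linear part of the isotropy (a copy of $\mathfrak{gl}$ or $\mathfrak{sl}$ acting on the relevant coordinates, which is present because the model has large symmetry) has no nonzero invariant vector in its standard-type representation on $1$-forms, killing the constant part; the remaining coordinate $z^1,\overline{z^1}$ entering the Christoffel symbols is pinned down by the scaling symmetry that rescales $z^1$ against $z^2$. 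Carrying this out amounts to solving the linear system $\mathcal{L}_{V_a}\gamma=0$ as $V_a$ ranges over the basis, which is a finite linear-algebra computation. I expect the main obstacle to be organising this last step cleanly: one must make sure the chosen generating set of symmetries is rich enough that the only common invariant $1$-form is $0$, and handle the mild non-genericity caused by the coordinate $z^1$ appearing explicitly in the connection — so I would isolate the ``generic locus'' where $z^1\neq 0$, prove uniqueness there, and then extend by continuity (real-analyticity) of $\gamma$ to all of the coordinate chart.
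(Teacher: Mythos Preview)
Your approach is correct but takes a different route from the paper's. The paper recognises the model as a \emph{reductive Klein geometry}: the explicit symmetry vector fields from \cite{KMT} show the action is transitive, and the isotropy $\h$ admits a complementary $\h$-module $\m$. Nomizu's theorem then parametrises \emph{all} invariant affine connections by $(\m^*\otimes\m^*\otimes\m)^{\h}$, and the paper checks this space is zero by decomposing $\m=\m_{1,2}\oplus\m_{3,\ldots,n}$ and using the large $\mathfrak{gl}$-block acting on the second summand. This yields both existence (the canonical connection, Nomizu map $0$) and uniqueness among all invariant affine connections in one algebraic step.

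Your argument instead works inside the c-projective class, reducing uniqueness to the vanishing of invariant $1$-forms. This is valid and more elementary --- no Nomizu machinery --- but proves strictly less: you get uniqueness only within $[D]_c$, whereas the paper gets uniqueness among all affine connections. For the later applications (e.g.\ the identification of $D_0$ with the restricted Obata connection), the stronger statement is what is actually invoked. Two smaller remarks: your existence step (checking $\mathcal L_V D_0=0$ for each generator) is a genuine computation that the paper avoids entirely via the canonical connection; and your concern about the ``generic locus $z^1\neq 0$'' and analytic continuation is unnecessary, since transitivity of the action means an invariant $1$-form is determined by its value at the origin, so your uniqueness step is really just the computation of $(\m^*)^{\h}=\{0\}$ --- a strict sub-problem of the paper's $(\m^*\otimes\m^*\otimes\m)^{\h}=\{0\}$.
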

\begin{proof}
	The submaximal c-projective model is shown to be unique in \cite{KMT}, and explicit symmetry vector fields are given in section 3 of that paper: the generators are the real and imaginary parts of
\begin{align*}
&\partial_{z_1}-\tfrac{1}{2}\bar z_1^2 \,\partial_{\bar z_2}\,,\,\partial_{z_2},\ldots,\partial_{z_n}\\
&z_1 \partial_{z_1} + 2 z_2 \partial_{z_2}+ \bar z_2 \partial_{\bar z_2}, \, z_i \partial_{z_j}, \, (i\not=2,j>1)
\end{align*}
	From there we can see that the symmetry algebra realizes the model as a reductive klein geometry. Let $\h$ be the isotropy algebra at $x=0$, generated by the last vector fields. Then by Nomizu's theorem, invariant connections are in bijective correspondence with $\h$--equivariant bilinear maps $\m \rightarrow \m^\ast \otimes \m$, where $\m$ is a reductive complement to $\h$. Thus the statement that the invariant connection is unique is equivalent to the statement 
	\begin{eqnarray}
		(\m^\ast \otimes \m^\ast \otimes \m)^\h = \{ 0 \}
		\label{}
	\end{eqnarray}
Thus we may compute isotropy invariant tensors rather than dealing with connections. It is clear that the vector fields with indices 1,2 form a subalgebra, while the vector fields involving only indices $3,\ldots, n$ form a complex affine algebra. These subalgebras commute, and lets us decompose $\m = \m_{1,2} \oplus \m_{3,\ldots,n}$. The complex affine algebra is large enough that there is obviously no invariant elements in any tensor products involving $\m_{3,\ldots,n}$, with the only possible exception being elements of $\m_{1,2}\otimes Id_{3,\ldots,n}$ or similar, but these fail to be invariant because there is no trivial submodule in $\m_{1,2}$.  Thus any invariant tensor would be in $(\m_{1,2}^\ast \otimes \m_{1,2}^\ast \otimes \m_{1,2})^\h$. Verifying that the latter space is equal to $\{ 0\}$ is a simple linear algebra computation in fixed dimension. This means that there is a unique invariant connection, corresponding to the tensor $0$. 
\end{proof}

\begin{remark}
The invariant connection corresponding to the Nomizu map $0$ on a reductive Klein geometry is also known as the canonical connection in the literature.
\end{remark}
 
Next we consider the submaximally symmetric quaternionic structure $M$ from \cite{KWZ}, with symmetry algebra $\mathfrak{g}$ as described in Section $5$ of that paper.  
 This quaternionic structure is described as the functional span of local almost complex structures $I,J,K$. This frame is locally hypercomplex, and so there exists a unique Obata connection $D$, i.e. a torsion-free connection which satisfies $D I = D J = D K =0$. The isotropy algebra $\h$ is given there as
$$
\h=\sp(1)\oplus\bigl(\frak{so}(2)\oplus\R\oplus\mathfrak{gl}(n-2,\H)\bigr)\ltimes
\mathfrak{heis}(8n-12,\H)
$$
and the isotropy representation is given as quaternionic matrices acting on $\m\simeq \H^n$. The $\h$--ideal $\sp(1)$ acts as a linear quaternionic structure on $\m$. We may then find a diagonal operator in the other ideal, in particular contained in $\so(2)\oplus \sp(n-2)$, where $\sp(n-2)\subset \mathfrak{gl}(n-2,\H)$, which acts as a linear almost complex structure. Call this $v_{\so(2)\oplus \sp(n-2)}$. Since the diagonal of $\so(2)\oplus \sp(n-2)$ is given by blocks of the form
$$
\begin{bmatrix}
0 & -1 & 0 & 0 \\
1 & 0 & 0 & 0 \\
0 & 0 & 0 & 1 \\
0 & 0 & -1 & 0 \\
\end{bmatrix}
$$
in a real basis respecting the quaternionic structure, we may find a block diagonal operator from $\sp(1)$ consisting of blocks of the form
$$
\begin{bmatrix}
0 & -1 & 0 & 0 \\
1 & 0 & 0 & 0 \\
0 & 0 & 0 & -1 \\
0 & 0 & 1 & 0 \\
\end{bmatrix}.
$$
Call the latter element $v_{\sp(1)}$. Then the element
$$
v = v_{\sp(1)} + v_{\so(2)\oplus \sp(n-2)}.
$$	
admits an eigenspace with eigenvalue $0$ of dimension $2n$. Let $h_1,h_2,\ldots,h_{4n}$ be local coordinates as in Section 5.1 of \cite{KWZ}. Then we may write the vector field corresponding to $v$ as
\begin{eqnarray}
v = h_4\partial_{h_3}-h_3 \partial_{h_4} +\ldots + h_{4n}\partial_{h_{4n-1}}-h_{h_{4n-1}} \partial_{h_{4n}}
	\label{b}
\end{eqnarray}
It is easy to see that the vanishing set of $v$ is given by
\begin{eqnarray}
	\{x\, | \, h_j(x) = 0,\, j\mod 4 \equiv 3,4 \}
	\label{}
\end{eqnarray}
This yields a local submanifold $S$ of dimension $2n$, with coordinates $h_j, j \mod 4 \equiv 1,2$. By construction, $v$ acts non-trivially on the quaternionic bundle restricted to $S$, fixing a sub-bundle of rank 1, and acting as non-trivial rotations on the fibres of the complement. 

The above observations can be summarized in the following remark.
\begin{remark}
The submaximal quaternionic model $M$ from \cite{KWZ} admits an $S^1$ action with fixed points $S$ of dimesnion $2n$ which locally near $S$ has no triholomorphic points. Therefore, $M$ arises locally from $S$ by the generalized Feix--Kaledin construction, where the c-projective structure on $S$ is induced (by restriction and projection) by the quaternionic structure on $M$.
\end{remark}

Recall that,  by the generalized Feix--Kaledin construction, from a fixed c-projective structure (of right type) on $S$ we can construct many quaternionic manifolds by using different holomorphic line bundles with a connection (of right type) on $S$ (see Section \ref{FK}). Therefore for full understanding of the relation between $S$ and $M$ we need to verify what is the line bundle in this case. Because (see \cite{KWZ}) is locally hypercomplex, we can expect that in this case the construction is the hypercomplex Feix construction and therefore the line bundle is $\mathcal{O}(-1)$ with a connection induced by some real-analytic connection in the c-projective class with type $(1,1)$ full curvature. This turns out to be true which we will discuss in the remaining part of this section.

\begin{proposition}
	The normalizer algebra $N(v)$ of $v$ restricts to, and acts locally transitively on $S$. The dimension of $N(v)$ is $2n^2-2n+5$. 
\end{proposition}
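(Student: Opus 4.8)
The plan is to combine the explicit description of the submaximal quaternionic symmetry algebra $\mathfrak g$ and of the vector field $v$ from \cite{KWZ} with the structural facts assembled above. First I would check that $N(v)$ is tangent to $S$. If $X\in\mathfrak g$ satisfies $[X,v]\in\R v$, then $(\Phi^X_s)_*v$ is a scalar multiple of $v$, so the flow of $X$ carries the flow of $v$ to a time-reparametrisation of itself and hence preserves its fixed-point set $S$; thus every element of $N(v)$ restricts to a vector field on $S$ and restriction defines a Lie algebra homomorphism $\rho\colon N(v)\to\Gamma(TS)$. Since the quaternionic structure on $M$ induces on $S$ precisely the submaximal c-projective structure of curvature type $(1,1)$ (the Remark above), $\rho$ lands in the algebra of c-projective symmetries of $S$, which by \cite{KMT} has dimension $2n^2-2n+4$. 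As $S$ is the zero set of $v$ we have $v|_S=0$, hence $\R v\subseteq\ker\rho$; therefore the estimate $\dim N(v)\le 2n^2-2n+5$ will follow once $\ker\rho=\R v$.

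Next I would pin down $N(v)$ inside $\mathfrak g$ from the explicit model. Since $v$ generates a compact $S^1$ the operator $\operatorname{ad}_v$ is semisimple with purely imaginary spectrum, and there is no $X$ with $[X,v]=cv$, $c\ne0$, because $v$ is an elliptic element lying in the subalgebra $\sp(1)\oplus\so(2)\oplus\sp(n-2)$ of $\mathfrak h$ on which the grading element of $\mathfrak h$ acts trivially; hence $N(v)=C_{\mathfrak g}(v)=\ker\operatorname{ad}_v$. I would then compute this kernel via the $\operatorname{ad}_v$-eigenspace decomposition of $\mathfrak g=\mathfrak h\oplus\m$. On $\m\simeq\H^n$ the operator $\operatorname{ad}_v$ is, up to sign, the linear action of $v$, whose kernel is exactly $T_0S$, of dimension $2n$. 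On $\mathfrak h=\sp(1)\oplus\bigl(\so(2)\oplus\R\oplus\mathfrak{gl}(n-2,\H)\bigr)\ltimes\mathfrak{heis}(8n-12,\H)$ the centraliser of $v$ is computed block by block: $v_{\sp(1)}$ is a regular element of $\sp(1)$ and contributes a $1$-dimensional centraliser there; $\so(2)$ and $\R$ commute with $v$; inside $\mathfrak{gl}(n-2,\H)$ one computes the centraliser of the almost complex structure given by the $\sp(n-2)$-component of $v_{\so(2)\oplus\sp(n-2)}$; and on the quaternionic Heisenberg ideal the action of $v$ is read off directly from the coordinates $h_1,\dots,h_{4n}$. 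Summing these contributions gives $\dim N(v)=2n^2-2n+5$, and inspecting the resulting generators in the coordinates of \cite{KWZ} shows that exactly the line $\R v$ vanishes on $S$, so $\ker\rho=\R v$.

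Local transitivity is then automatic: rank--nullity gives $\dim\im\rho=2n^2-2n+4$, so $\rho$ maps $N(v)$ \emph{onto} the algebra of c-projective symmetries of $S$; since the submaximal c-projective model is a reductive Klein geometry its symmetry algebra acts transitively on $S$, and hence so does $N(v)$. Alternatively one can verify transitivity by hand, evaluating the generators of $N(v)$ at a point of the slice $\{h_j=0:\ j\equiv 3,4\ (\mathrm{mod}\ 4)\}$ and checking that the values span the $2n$-dimensional tangent space.

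I expect the only genuine obstacle to be the centraliser bookkeeping in arbitrary quaternionic dimension $n$ --- handling the $\mathfrak{gl}(n-2,\H)$ summand and the quaternionic Heisenberg ideal uniformly, and confirming that the total collapses to $2n^2-2n+5$ together with $\ker\rho=\R v$. The conceptual ingredients (that $N(v)$ is tangent to $S$, that $\operatorname{ad}_v$ is semisimple so that $N(v)$ coincides with the centraliser, and that the reductive/Klein structure of $S$ upgrades the dimension count to local transitivity) are routine.
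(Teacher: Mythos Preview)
Your centraliser computation on the reductive splitting $\mathfrak g=\h\oplus\m$ is exactly the paper's argument, and your remark that $N(v)=C_{\mathfrak g}(v)$ because $\operatorname{ad}_v$ is semisimple is a useful clarification the paper leaves implicit. The paper is more economical, though: it observes that $N(v)\cap\m$ is the $0$-eigenspace of $v$ on $\m$, of dimension $2n$, and this alone already gives local transitivity on $S$; then on $\h$ it records that the ideal $\sp(1)$ contributes only the line $\langle v_{\sp(1)}\rangle$, and that on the remaining ideal---which acts on $\m$ by quaternionic matrices---the centraliser of the complex-structure element $v_{\so(2)\oplus\sp(n-2)}$ consists precisely of those elements whose matrix entries are complex numbers. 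Summing gives $2n^2-2n+5$ without any reference to the c-projective geometry of $S$.

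Your additional layer---bounding $\dim N(v)$ via the restriction map $\rho$ into the c-projective symmetries of $S$, and deducing transitivity from surjectivity of $\rho$---is where you diverge, and here there is a logical ordering problem. You assert that $S$ carries ``precisely the submaximal c-projective structure of type $(1,1)$'', citing the Remark, but the Remark only says that $M$ arises from $S$ by the generalized Feix--Kaledin construction; the identification of the induced c-projective structure on $S$ with the submaximal type-$(1,1)$ model (and hence the value $2n^2-2n+4$ for its symmetry dimension) is the content of the \emph{subsequent} proposition, whose proof uses the present one as input. So the $\rho$-route to both the upper bound and transitivity is circular as stated. Since you also carry out the direct block computation and offer the direct transitivity check by evaluating generators at a point, your proof survives once the $\rho$-scaffolding is removed---and at that point it coincides with the paper's.
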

\begin{proof}
	The normalizer algebra $N(v)$ preserves the spaces of invariants of $v$. This means that all vector fields in $N(v)$ are tangent to $S$. Since the symmetry algebra of $Q$ is a reductive Klein geometry, we have an invariant complement $\m$ to the isotropy $\h$, and the normalizer of $v$ can be split in the same way. Thus $N(v)\cap \m$ is the zero-eigenspace of $v$ in $\m$, of dimension $2n$. This yields local transitivity on $S$. Write again
$$
v = v_{\sp(1)} + v_{\so(2)\oplus \sp(n-2)}.
$$	
The subalgebra $\sp(1)$ is an ideal in $\h$, which gives the contribution $\langle v_{\sp(1)}\rangle $  only to the normalizer. For the rest of $\h$, the isotropy representation is given by quaternionic matrices. In this representation, the element $v_{\so(2)\oplus \sp(n-2)}$ is a linear almost complex structure. The normalizer of this component will then be precisely those elements whose entries are actually complex numbers. Adding all of this together yields the desired dimension of $N(v)$.
\end{proof}
\begin{proposition}
	The Obata connection $D$ restricts to $S$, i.e. if $\xi,\zeta \in T_xS$ then $D_\xi \zeta \in T_xS$. The restricted connection $D_0 = D|_S$ is invariant with respect to $N(v)$.
\end{proposition}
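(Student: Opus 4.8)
The plan is to prove two things: first that the Obata connection $D$ is tangent to $S$ along $S$, and second that the restriction $D_0 = D|_S$ is $N(v)$-invariant. For the first part, I would use that $S$ is the fixed-point set of the $S^1$-action generated by $v$, which is quaternionic but \emph{not} triholomorphic, and that the Obata connection is the canonical torsion-free connection with $DI=DJ=DK=0$. The key observation is that the flow $\Phi^v_t$ maps $S$ to itself pointwise, hence preserves $TS\subset TM|_S$, and it acts on the quaternionic bundle $Q|_S$ fixing a rank-$1$ sub-bundle and rotating the complementary rank-$2$ sub-bundle (as noted in the construction of $v$ above). Since $D$ is the unique torsion-free connection preserving all three of $I,J,K$, and the $S^1$-action is quaternionic (so $\Phi^v_t$ permutes the complex structures compatibly), $\Phi^v_t$ preserves the class of quaternionic connections; combined with the fact that the Obata connection is intrinsically determined by a \emph{hypercomplex} structure, one argues that $\Phi^v_t$ actually preserves $D$ itself whenever the action is by hypercomplex automorphisms — but here it is not hypercomplex, so instead I would argue infinitesimally: $\mathcal{L}_v D$ is a tensor which must vanish on $TS$ because $v$ vanishes on $S$ together with enough of its derivatives. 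More directly, one can use the splitting $TM|_S = TS \oplus N$, where $N = I_0(TS)$ for the fixed complex structure $I_0 \in Q|_S$ on $S$, and show that the $S^1$-action respects this splitting (it fixes $TS$ and acts on $N$), so that the second fundamental form and the $N$-components of $D_\xi\zeta$ for $\xi,\zeta \in TS$ are $S^1$-invariant sections of a bundle on which $S^1$ acts with no nonzero fixed vectors in the relevant weight — forcing them to vanish. This gives $D_\xi\zeta \in TS$.

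For the second part, $N(v)$-invariance of $D_0$, the argument is formal once the restriction is established. Every $W \in N(v)$ is a quaternionic symmetry of $M$ which normalizes $v$, hence its flow preserves $S$ (as shown in the previous proposition, vector fields in $N(v)$ are tangent to $S$) and preserves the $S^1$-action, hence preserves the fixed-point data used to split $TM|_S$. Since $W$ is in particular a quaternionic symmetry, its flow preserves the class $[D]_q$; and because it commutes with the $S^1$-action up to the isotropy contribution, it preserves the Obata connection $D$ (the Obata connection being canonically attached to the hypercomplex structure, which $N(v)$ preserves). Restricting the identity $\mathcal{L}_W D = 0$ to vectors tangent to $S$ and using that $W$ is tangent to $S$ gives $\mathcal{L}_{W|_S} D_0 = 0$, i.e. $D_0$ is $N(v)$-invariant.

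The main obstacle I expect is the first part: carefully justifying that $D_\xi\zeta$ has no normal component for $\xi,\zeta \in T_xS$. The subtlety is that $S$ being totally complex and a fixed-point set does not automatically make it totally geodesic — one genuinely needs the interplay between the Obata connection and the weights of the $S^1$-action on the normal bundle. Concretely: the Obata connection on $M$, restricted to $S$, decomposes as a connection on $S$ plus a second fundamental form $\mathrm{I\!I}: S^2 T^*S \to N$; this tensor is $S^1$-invariant because $D$ and the splitting are, but $S^1$ acts on $N$ by the rotation coming from the non-triholomorphic condition (weight $\pm 1$ relative to the fixed $I_0$) while acting trivially on $T^*S$, so the only invariant element of $S^2 T^*S \otimes N$ is zero. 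One must check the weight bookkeeping is exactly right — in particular that the complexified normal bundle carries weights $\pm 1$ and $S^2 T^*_\mathbb{C} S$ carries weight $0$ — which is where the no-triholomorphic-points hypothesis enters decisively. This is the analytic heart of the argument; the $N(v)$-invariance in the second part is then essentially a naturality statement.
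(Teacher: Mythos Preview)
Your approach differs substantially from the paper's. The paper argues entirely via Nomizu's theorem for reductive homogeneous spaces: a computation parallel to that in Proposition~\ref{cprojconunique} shows $(\m^*\otimes\m^*\otimes\m)^\h=\{0\}$ on $M$, so the Obata connection is the unique $\mathfrak g$-invariant connection and is represented by the Nomizu map $0$. Since $N(v)$ realises $S$ as a reductive Klein geometry in its own right, the canonical connection on $S$ (Nomizu map $0$ for the subalgebra) is automatically the restriction of the one on $M$. Both tangency of $D$ to $S$ and $N(v)$-invariance of $D_0$ fall out at once; no second-fundamental-form or weight analysis appears.

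Your $S^1$-weight argument for the first part is a legitimate and in fact more general alternative --- it would apply to any hypercomplex manifold carrying such an $S^1$-action, not only this homogeneous one. The point you flag as uncertain, that $\Phi^v_t$ preserves $D$, is easily settled (and your infinitesimal attempt via ``$\mathcal L_v D$ vanishes on $TS$ because $v$ vanishes on $S$'' is not a valid deduction): since $\Phi^v_t$ sends the hypercomplex frame $(I,J,K)$ to a \emph{constant} rotation of itself, and any constant rotation of a hypercomplex frame has the same Obata connection, one gets $\mathcal L_v D=0$ globally and the weight bookkeeping goes through as you describe. The genuine gap is in your second part. You assert that $N(v)$ preserves $D$ because it preserves the hypercomplex structure, but a quaternionic symmetry is only guaranteed to preserve $Q$ and the class $[D]_q$; as the paper itself remarks, a quaternionic structure may carry inequivalent hypercomplex structures, so preservation of the specific Obata connection does not follow automatically from $W\in N(v)$ being quaternionic and normalising $v$. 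The paper's Nomizu argument bypasses this entirely: once the invariant-tensor space is shown to be zero, every element of $\mathfrak g$ --- hence of $N(v)\subset\mathfrak g$ --- preserves the unique invariant connection, and nothing further needs to be checked.
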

\begin{proof}
	A similar computation to the one in the proof of Proposition \ref{cprojconunique} shows that the Obata connection is the unique invariant connection on $M$. Therefore both connections are represented by the Nomizu map $0$ in their respective Lie algebras. The normalizer $N(v)$ a is a sub-algebra of the quaternionic symmetries, and is also a reductive Klein geometry in itself, so the connection represented by $0$ on $S$ must be the restriction of the one represented by $0$ on $M$.
\end{proof}
\begin{proposition}
	$S$ is a c-projective manifold, and its c-projective symmetry algebra is the restriction of the algebra $N(v)$ to $S$, of dimension $2n^2-2n+4$, which is the sub-maximal symmetry dimension.
\end{proposition}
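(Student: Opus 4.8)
The plan is to realise the c-projective symmetry algebra of $S$ as the image of the restriction homomorphism from $N(v)$, and then to compute the rank of that map. First, $S$ is a c-projective manifold: by the Remark above, $M$ is obtained from $S$ by the generalized Feix--Kaledin construction, so $S$ is maximal totally complex in $M$ and the quaternionic connection class restricts and projects to a c-projective class $[D_0]_c$ on $S$ of curvature type $(1,1)$, whose underlying connection $D_0$ is the restriction of the Obata connection by the previous proposition. As recalled in the introduction and established in \cite{KMT}, this induced structure is precisely the submaximal c-projective model of curvature type $(1,1)$; in particular it is not flat. Now by the proposition on $N(v)$ above every element of $N(v)$ is tangent to $S$, so restriction gives a linear map $\rho\colon N(v)\to\mathfrak{X}(S)$; and if $W\in N(v)$ then $W$ is a quaternionic symmetry normalising $v$, so its flow carries the $S^1$--action generated by $v$ to a reparametrisation of itself and hence preserves the fixed-point set $S$, the rank-one subbundle of $Q|_S$ fixed by $v$ (and thus the induced complex structure $J_0$), and the class of quaternionic connections (and thus $[D_0]_c$). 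Therefore $\rho$ is a Lie algebra homomorphism into the c-projective symmetry algebra $\mathfrak{c}(S)$ of $S$ — this is the easy direction of the correspondence treated in Section \ref{c-p}.

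The heart of the argument is the computation of $\ker\rho$, i.e. of the vector fields in $N(v)$ vanishing identically on $S$. Since $S$ is exactly the zero set of $v$, we have $\langle v\rangle\subseteq\ker\rho$. For the reverse inclusion I would use the decomposition $N(v)=(N(v)\cap\m)\oplus(N(v)\cap\h)$ from the normalizer proposition together with the explicit formulas of Section 5 of \cite{KWZ}: the summand $N(v)\cap\m$ is the $0$-eigenspace of $v$ and restricts to a family of vector fields spanning $T_0S$, so it meets $\ker\rho$ only in $0$; while among $N(v)\cap\h=\langle v_{\sp(1)}\rangle\oplus\mathfrak{n}_0$, with $\mathfrak{n}_0$ the $\mathbb{C}$--linear part of the remaining ideal, one checks from the block forms of $v_{\sp(1)}$ and $v_{\so(2)\oplus\sp(n-2)}$ and the coordinate expression for $v$ above that the only elements vanishing along all of $S$ are the multiples of $v=v_{\sp(1)}+v_{\so(2)\oplus\sp(n-2)}$. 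Hence $\dim\ker\rho=1$ and $\dim\rho(N(v))=\dim N(v)-1=2n^2-2n+4$.

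Finally I would identify $\rho(N(v))$ with all of $\mathfrak{c}(S)$. Since $[D_0]_c$ is a non-flat c-projective structure of type $(1,1)$ on a $2n$-manifold with $n>1$, the submaximal bound of \cite{KMT} gives $\dim\mathfrak{c}(S)\le 2n^2-2n+4$; together with the inclusion $\rho(N(v))\subseteq\mathfrak{c}(S)$ and the dimension count above, this forces $\mathfrak{c}(S)=\rho(N(v))$ and shows that the value $2n^2-2n+4$ is attained, i.e. it is the submaximal c-projective symmetry dimension. The only genuinely non-formal step is the verification that $\ker\rho=\langle v\rangle$: a priori one must exclude quaternionic symmetries that vanish on the entire fixed-point set $S$ without being proportional to $v$, and I expect to do this by the explicit coordinate check just indicated rather than by an abstract jet-determinacy argument; everything else follows from the two preceding propositions and the cited results of \cite{KMT} and \cite{KWZ}.
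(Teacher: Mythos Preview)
Your approach is essentially the paper's---restrict $N(v)$ to $S$ and observe that exactly $v$ is lost---but you are more explicit on two points where the paper's proof is terse. One genuine difference: the paper produces the complex structure on $S$ directly, by taking an $N(v)$--invariant section $J$ of the rank--one $v$--invariant subbundle of $Q|_S$ and noting that $J$ is parallel for the torsion--free Obata connection, hence integrable. You instead invoke the generalized Feix--Kaledin picture to get the c-projective structure. Both are valid, but the paper's route is self-contained at this stage of Section~\ref{model}, whereas yours borrows from the later summary.

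There is, however, a circularity you should repair. To apply the submaximal bound from \cite{KMT} and conclude $\dim\mathfrak{c}(S)\le 2n^2-2n+4$, you need the induced c-projective structure on $S$ to be non-flat. You justify this by asserting that the induced structure ``is precisely the submaximal c-projective model'', citing the introduction; but that identification is exactly what the chain of propositions in Section~\ref{model}, including this one, is meant to establish. A clean fix is to verify non-flatness directly: compute the curvature of $D_0$ from the explicit Christoffel symbols in \cite{KWZ} (restricted to $S$), or argue by contrapositive that c-projective flatness of $S$ together with the hypercomplex Feix data would force quaternionic flatness of $M$, contradicting the choice of $M$ as the non-flat submaximal model. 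With non-flatness supplied independently, your surjectivity argument via the \cite{KMT} bound goes through and in fact improves on the paper's proof, which simply asserts that the restriction gives the full symmetry algebra without invoking the bound. Your caution about the kernel computation $\ker\rho=\langle v\rangle$ is appropriate; the paper also takes this for granted, and your proposed coordinate check against the explicit isotropy blocks is the right way to close it.
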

\begin{proof}
	The $v$ invariant subbundle of $Q|_S$ admits an $N(v)$ invariant section $J$, which is an almost-complex structure on $S$. Since the Obata connection is torsion free and $J$ is parallel, $J$ is a complex structure. The c-projective class is generated by the connection $D_0 = D|_S$. The symmetry dimension is 1 less than $\dim N(v)$, because $v$ itself acts trivially by definition.
\end{proof}

By explicit computations we can also show that the Lie derivative $\mathcal{L}_vJ=0$ and that there is a hypercomplex frame $I,J,K$ such that $\mathcal{L}_vI=K$ and $\mathcal{L}_vK=I$. Therefore we conclude that $M$ indeed arise from $S$ by the hypercomplex Feix--Kaledin construction for some $D'$ from c-projective class. We also know that in this case the restriction of the Obata connection to $S$ is equal to $D'$, hence $D'=D_0$.

We can summarize the results in the following theorem.
 \begin{theorem}
 Let $(S,J,[D])$ be the $2n$-dimensional c-projective manifold with c-projective curvature of type $(1,1)$ for which the dimension of c-projective symmetries is submaximal and let $D$ be the unique connection in the c-projective class which is preserved by all c-projective symmetries. Then the manifold obtained by standard Feix--Kaledin construction from $(S,J,D)$ (or equivalently by generalized Feix--Kaledin construction from $(S,J,[D])$, $(\mathcal{O}(-1),\nabla^D)$) is the submaximal (torsion-free) quaternionic model discussed in \cite{KWZ}.
 
 \end{theorem}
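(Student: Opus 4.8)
The statement synthesizes the propositions proved above with the converse half of the generalized Feix--Kaledin theorem of \cite{BC}, so the plan is to collect these ingredients in the right order. First I would invoke the converse direction of the construction: the submaximal quaternionic model $M$ of \cite{KWZ} is locally hypercomplex, and the vector field $v$ constructed above generates a quaternionic $S^1$ action which near its fixed point set has no triholomorphic points and whose fixed point set is the $2n$-dimensional submanifold $S$. Hence, by \cite{BC}, $M$ is obtained locally near $S$ by the generalized Feix--Kaledin construction from $S$ together with \emph{some} holomorphic line bundle $\mathcal{L}$ carrying a compatible real-analytic connection $\nabla$ of curvature type $(1,1)$; moreover, since \cite{BC} shows that for a fixed c-projective $S$ the resulting quaternionic manifolds are parametrized by such pairs $(\mathcal{L},\nabla)$, it is enough to identify $S$ together with its c-projective structure and to pin down the pair $(\mathcal{L},\nabla)$.

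The identification of $S$ is already contained in the propositions above: $S$ carries a c-projective structure of curvature type $(1,1)$ whose unique invariant connection $D_0$ is the restriction to $S$ of the Obata connection of $M$, and whose c-projective symmetry algebra is $N(v)|_S$ of dimension $2n^2-2n+4$. Since a flat c-projective structure would admit the strictly larger maximal symmetry algebra, $S$ is not flat, so this realizes the submaximal symmetry dimension for type $(1,1)$ c-projective structures; by the uniqueness statement of \cite{KMT}, $(S,J,[D_0])$ is precisely the submaximal c-projective model of the theorem and $D_0$ is its unique symmetry-preserving connection.

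It remains to pin down $(\mathcal{L},\nabla)$. Here I would use the explicit computations recorded above showing that the $S^1$ action fixes a complex structure $J$ of a hypercomplex frame $I,J,K$ compatible with $Q$ and acts by rotation on the plane spanned by $I$ and $K$; this is exactly the situation characterizing the standard (hypercomplex) Feix construction, so $\mathcal{L}=\mathcal{O}(-1)$ and $\nabla=\nabla^{D'}$ for some connection $D'$ in the c-projective class, necessarily of type $(1,1)$ full curvature because $M$ is hypercomplex. Finally, in the hypercomplex case the Obata connection of the output restricts on $S$ to precisely the connection $D'$ entering the construction, and since that restriction has already been identified with $D_0$, we get $D'=D_0$. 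Feeding $\big((S,J,[D_0]),(\mathcal{O}(-1),\nabla^{D_0})\big)$ --- equivalently $(S,J,D_0)$ --- into the construction therefore returns $M$, which is the assertion.

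The main obstacle is this last step. The converse in \cite{BC} only produces \emph{some} data $(\mathcal{L},\nabla)$, and excluding every line bundle other than $\mathcal{O}(-1)$, as well as checking that the connection produced is the one induced by $D_0$ itself and not merely some representative of its c-projective class, is what forces the explicit Lie-derivative computation on the hypercomplex frame of \cite{KWZ} together with the identification of the restricted Obata connection; once the underlying c-projective datum and the pair $(\mathcal{L},\nabla)$ are fixed, the conclusion is immediate from the well-definedness of the construction.
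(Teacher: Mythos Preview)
Your proposal is correct and follows essentially the same route as the paper: the theorem is presented there as a summary of the preceding propositions, and your argument assembles exactly those ingredients---the converse of \cite{BC} applied to the $S^1$ action generated by $v$, the identification of $S$ with the unique submaximal type $(1,1)$ model via the symmetry count and \cite{KMT}, the explicit Lie-derivative computation on the hypercomplex frame to force $\mathcal{L}=\mathcal{O}(-1)$, and the identification $D'=D_0$ via the restriction of the Obata connection. The ``main obstacle'' you flag is precisely the step the paper handles by the explicit computation of $\mathcal{L}_vI$, $\mathcal{L}_vJ$, $\mathcal{L}_vK$ together with the known fact that in the hypercomplex Feix case the Obata connection restricts to the input connection.
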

 \section{C-projective symmetries}\label{c-p}
It is easy to show (see \cite{BC}) that totally complex submanifolds of quaternionic manifolds admit induced real analytic c-projective structures such that their c-projective curvature is of type $(1,1)$. In this section we study which c-projective symmetries extend to quaternionic symmetries from maximal totally complex submanifolds $S\subset M$ in the case when they are the zero set of a quaternionic $S^1$ action with no tri-holomorphic points, i.e., when it arises by the generalized Feix--Kaledin construction (see Section \ref{FK}). 
 
Let $(S,J,[D]_c)$ be a real analytic c-projective manifold of real dimension $2n$ $(n>1)$ with a c-projective curvature of type $(1,1)$. Let $V$ be a symmetry of the c-projective structure on $S$ and denote by $\Phi_t^V$ the local flow of $V$. 
\begin{lemma}\label{pD}
The local flow  $\Phi_t^V$ preserves the complex structure and the tractor connection on the co-standard tractor bundle.
\end{lemma}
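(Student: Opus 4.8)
The plan is to deduce both assertions from the naturality of the c-projective co-tractor construction under isomorphisms of c-projective manifolds. Recall that $V$ being an infinitesimal c-projective symmetry means $\mathcal{L}_V J = 0$ and $\mathcal{L}_V[D]_c = 0$; equivalently $[\Ph]_* J = J$ and, for every $D\in[D]_c$, there is $D'\in[D]_c$ with $[\Ph]^* D = D'$. The first condition is already the claim for the complex structure. Moreover, since $J$ is integrable and is preserved by $[\Ph]$, the flow $[\Ph]$ is a local biholomorphism of $(S,J)$, so --- in the spirit of the lifting construction recalled in Section \ref{comp}, applied to the natural bundle $\mathcal{O}(1) = (\Lambda^n T^{1,0}S)^{1/(n+1)}$ --- it lifts canonically to $\mathcal{O}(1)$ (the lift determined by continuity from $t=0$) and hence, the $1$-jet construction being functorial, to a bundle automorphism $\widetilde{[\Ph]}$ of the co-tractor bundle $j^1\mathcal{O}(1)$ covering $[\Ph]$. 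We work near a fixed point $x\in S$ and for $|t|$ small, so the local existence of $\mathcal{O}(1)$ is no restriction.

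It then remains to check $\widetilde{[\Ph]}^{*}\mathcal{D} = \mathcal{D}$, where $\mathcal{D}$ is the c-projective co-tractor connection. Here I would use that $\mathcal{D}$ is an invariant of the c-projective structure: in the splitting $j^1\mathcal{O}(1)\cong_D\mathcal{O}(1)\oplus T^{1,0}S\otimes\mathcal{O}(1)$ determined by a choice $D\in[D]_c$ it is given by the explicit formula of Section \ref{trac} in terms of $D$, the tautological surjection $j^1\mathcal{O}(1)\to T^{1,0}S\otimes\mathcal{O}(1)$, and the normalized Ricci tensor $\ri_c^D$; passing to another representative of $[D]_c$ changes this splitting but not the connection $\mathcal{D}$ on $j^1\mathcal{O}(1)$. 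All ingredients of the formula are built functorially from $J$ and the chosen connection, so $\widetilde{[\Ph]}^{*}\mathcal{D}$ is exactly the co-tractor connection attached to the pulled-back c-projective data $([\Ph]^*J,\,[\Ph]^*[D]_c)$ --- concretely, the connection built from $D' = [\Ph]^*D$ in its own splitting. Since $[\Ph]^*J = J$ and $[\Ph]^*[D]_c = [D]_c$, this connection is again $\mathcal{D}$, which proves the lemma.

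The only point that requires genuine verification is the splitting-independence invoked above: that under a change of representative within $[D]_c$ the tensor $\ri_c^D$ transforms precisely so as to make the formula for $\mathcal{D}^D$ descend to a well-defined connection on $j^1\mathcal{O}(1)$ that is independent of $D$. This is the standard fact that the c-projective Schouten tensor carries the correct transformation law, and it can be checked by a short computation in local holomorphic coordinates $z^i,\overline{z^i}$; alternatively one may bypass it entirely by invoking the equivalence between c-projective structures and their normal parabolic Cartan geometries, under which $[\Ph]$ is an automorphism of the Cartan geometry and hence automatically preserves every associated tractor connection, in particular $\mathcal{D}$ on $j^1\mathcal{O}(1)$. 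In either case the argument is short and formal; the type-$(1,1)$ hypothesis on the c-projective curvature plays no role here --- it will only be needed later, when these data are fed into the generalized Feix--Kaledin construction of Section \ref{FK}.
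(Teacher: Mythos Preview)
Your proposal is correct and follows essentially the same approach as the paper: both rely on the fact that $\Ph$ sends a representative $D\in[D]_c$ to some $D'\in[D]_c$, and that the co-tractor connection $\mathcal{D}$ is independent of which representative is used in the splitting formula. The paper makes this explicit by writing out the computation $(\Ph)_*\mathcal{D}^D=\mathcal{D}^{D'}=\mathcal{D}$ term by term, whereas you package the same content as a naturality statement (and offer the Cartan-geometry shortcut as an alternative); neither argument needs anything the other does not.
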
 

\begin{proof}
Recall that the co-standard tractor bundle $\mathcal{T}^*$ is the $1$-jet bundle $J^1\mathcal{O}(1)$, and a choice of a connection $D$ in the c-projective class gives a splitting of the $1$-jet sequence such that $\mathcal{T}^*=\mathcal{O}(1)\oplus [T^*S\otimes\mathcal{O}(1)]$ (see Section \ref{trac}). The complex structure on $\mathcal{T}^*$ is induced from $J$ on $S$ in the natural way hence clearly $\Ph$ preserves the complex structure on $\mathcal{T}^*$. As $\Ph$ preserves the c-projective structure, at any point for any connection $D$, there exists a connection in the c-projective class such that $(\Ph)_*(D)=D'$. As a consequence, if we apply $(\Ph)^{*}$ to the $1$-jet sequence we get that the direct sum decomposition of $\mathcal{T}^*$ given by $D$ is transformed into direct sum decomposition given by $D'$, i.e., $$(\Ph)_*\left(\begin{array}{c}l\\\alpha\end{array}\right)_D=\left(\begin{array}{c}(\Ph)^*l\\(\Ph)_*\alpha\end{array}\right)_{D'}.$$ Recall that choosing the connection $D$, the explicit formula for the tractor connection is $$\mathcal{D}^D_Y\left(\begin{array}{c}l\\\alpha\end{array}\right)_D=\left(\begin{array}{c}D_Yl-\alpha(Y)\\ D_Y\alpha+(\ri_c^D)_Yl\end{array}\right)_D.$$
We have
$$(\Ph)_*\mathcal{D}[(\Ph)_*\left(\begin{array}{c}l\\\alpha\end{array}\right)_D]=(\Ph)_*(\mathcal{D}^D\left(\begin{array}{c}l\\\alpha\end{array}\right)_D)=(\Ph)_*\left(\begin{array}{c}Dl-\alpha(\cdot)\\ D\alpha+(\ri_c^D)_{(\cdot)}l\end{array}\right)_D=$$
$$=\left(\begin{array}{c}(\Ph)_*[Dl-\alpha(\cdot)]\\  (\Ph)_*[ D\alpha+(\ri_c^D)_{(\cdot)}l]\end{array}\right)_{D'}=\left(\begin{array}{c}D'(\Ph)^*l-(\Ph)_*\alpha(\cdot)\\ D'(\Ph)_*\alpha+(\ri_c^{D'})_{(\cdot)}(\Ph)^*l\end{array}\right)_{D'}=$$  $$=\mathcal{D}^{D'}[\left(\begin{array}{c}(\Ph)^*l\\(\Ph)_*\alpha\end{array}\right)_{D'}]=\mathcal{D}[(\Ph)_*\left(\begin{array}{c}l\\\alpha\end{array}\right)_D].$$
Hence $(\Ph)_*\mathcal{D}=\mathcal{D}$ which finishes the proof.
\end{proof}
\begin{cor}
Suppose that $\mathcal{L}$ is a holomorphic line bundle associated to a holomorphic tangent bundle on $S$ and $\nabla$ a complex connection on $S$ compatible with the holomorphic structure with curvature of type $(1,1)$. Moreover, suppose that $V$ is also a symmetry of $(\mathcal{L},\nabla)$. Then the local flow $\Ph$ also preserves the twisted tractor connection which is a tensor product connection on $\mathcal{T}^*\otimes \mathcal{L}$.
\end{cor}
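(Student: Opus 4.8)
The plan is to deduce this directly from Lemma \ref{pD}, exploiting the fact that the pull-back of a tensor product connection is the tensor product of the pulled-back connections, so that it suffices to know that the flow preserves each factor separately.

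First I would fix notation: write $\nabla$ also for the linear connection that $\nabla$ induces on $\mathcal{L}$, so that the twisted tractor connection on $\mathcal{T}^*\otimes\mathcal{L}$ is by definition the tensor product connection $\mathcal{D}^{\mathcal{L}}$ determined by $\mathcal{D}^{\mathcal{L}}_Y(t\ot s)=(\mathcal{D}_Y t)\ot s+t\ot(\nabla_Y s)$ together with the Leibniz rule. Since $\mathcal{L}$ is associated to the holomorphic tangent bundle of $S$, the local flow $\Ph$ has a canonical lift to $\mathcal{L}$ via the push-forward construction recalled in Section \ref{comp} (the same one producing $\Phi_t^{V^c}$ on $TS$ and its associated bundles); the hypothesis that $V$ is a symmetry of $(\mathcal{L},\nabla)$ is then precisely the statement $\Ph_*\nabla=\nabla$ for this lift. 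On the co-standard tractor side, Lemma \ref{pD} already gives $\Ph_*\mathcal{D}=\mathcal{D}$ and that $\Ph$ preserves the complex structure, which is all we need (the type $(1,1)$ hypotheses on the two curvatures are used only to guarantee that the objects involved are tractor data of the correct kind, not in the present argument).

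Then, using that the induced action of $\Ph$ on $\mathcal{T}^*\otimes\mathcal{L}$ is the tensor product of its induced actions on the two factors, and that pull-back of connections is compatible with tensor products, one computes
$$\Ph_*\mathcal{D}^{\mathcal{L}}=(\Ph_*\mathcal{D})\ot\id+\id\ot(\Ph_*\nabla)=\mathcal{D}\ot\id+\id\ot\nabla=\mathcal{D}^{\mathcal{L}},$$
which is the assertion. The only point requiring a little care — and it is routine given the material of Section \ref{comp} — is checking that the canonical lift of $\Ph$ to the tensor product bundle really does split as the tensor product of its lifts to $\mathcal{T}^*$ and to $\mathcal{L}$, so that the mixed term in the Leibniz rule transforms as written; this is just bilinearity of the push-forward on associated bundles, and I do not expect any genuine obstacle here.
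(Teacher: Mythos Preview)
Your argument is correct and is exactly the reasoning the paper intends: the corollary is stated without proof because it follows immediately from Lemma~\ref{pD} together with the hypothesis that $V$ preserves $(\mathcal{L},\nabla)$, via the compatibility of push-forward with tensor products of connections. There is nothing to add.
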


Let us consider a complexified co-tractor bundle over $S^c$ which (as it is complex, see Section \ref{comp}) splits into a direct sum of pull-backs of bundles over $S$ and $\overline{S}$. Observe that the bundles are isomorphic to projective co-tractor bundles on $S$ and $\overline{S}$, and the complexified tractor connection induces connections on them. Recall that $S^c$ admits two transverse foliations by holomorpic leaves (see Section \ref{comp}). It follows from using the properties of the complexifications and the results from \cite{BC} that the obtained connections are trivial along the leaves in one direction (different for the two connections).
Moreover, along the leaves in the other direction they are projective co-tractor connections. The conditions on curvatures imply that the projective structures on the leaves are flat (hence projective tractor connections along the leaves are flat, see Sections \ref{trac} and \ref{FK}) and connections on the line bundles are also flat along the leaves. 
\begin{lemma}
The symmetry vector field $V$ extends to a vector field on a complexification $S^c$, and it preserves the complexified twisted tractor connection as well as the product structure (hence it preserves the corresponding connections on the pull back bundles). 
\end{lemma}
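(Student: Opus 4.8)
The plan is to obtain the extension by real-analytic continuation, and then to identify the two relevant objects on $S^c$ — the pushed-forward connection and the connection itself — by appealing to the uniqueness of holomorphic extensions off the totally real locus $S_{\mathbb{R}}$, exactly as in Section \ref{comp}.

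First I would extend $V$ to $S^c$. Since $V$ is real-analytic on the real-analytic manifold $S$, its components in real-analytic coordinates extend to holomorphic functions on a neighbourhood of $S_{\mathbb{R}}$, yielding a holomorphic vector field $V^c$ with $V^c|_{S_{\mathbb{R}}}=V$. Being holomorphic, $V^c$ commutes with the ambient complex structure; and since $V$ preserves $J$ on $S$, the extension preserves the two integrable subbundles of $TS^c$, so its local flow $\Phe$ is holomorphic and sends leaves of the $(1,0)$- and $(0,1)$-foliations to leaves of the same foliation. Consequently $\Phe$ preserves the product structure $S^c\cong S\times\overline{S}$ near the diagonal, and hence also the induced splitting $\mathcal{L}^c=\mathcal{L}^{1,0}\oplus\mathcal{L}^{0,1}$ and the analogous splitting of the complexified co-tractor bundle, because each summand is characterised by triviality along the leaves of one of the two foliations, a property manifestly preserved by $\Phe$.

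Next, to see that $\Phe$ preserves the complexified twisted tractor connection $\mathcal{D}^c$, I would argue by the identity theorem. By the Corollary preceding this lemma, $\Ph$ preserves the twisted tractor connection $\mathcal{D}$ on $\mathcal{T}^*\otimes\mathcal{L}$ over $S$. Both $(\Phe)_*\mathcal{D}^c$ and $\mathcal{D}^c$ are holomorphic connections on the holomorphic bundle $(\mathcal{T}^*\otimes\mathcal{L})^c$ near $S_{\mathbb{R}}$, and a holomorphic connection near $S_{\mathbb{R}}$ is determined by its restriction to $S_{\mathbb{R}}$; since $\Phe|_{S_{\mathbb{R}}}=\Ph$ and $\mathcal{D}^c|_{S_{\mathbb{R}}}=\mathcal{D}$, the two connections restrict on $S_{\mathbb{R}}$ to $(\Ph)_*\mathcal{D}$ and $\mathcal{D}$ respectively, which agree by the Corollary. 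Hence $(\Phe)_*\mathcal{D}^c=\mathcal{D}^c$ on a neighbourhood of $S_{\mathbb{R}}$. Since $\Phe$ preserves both $\mathcal{D}^c$ and the decomposition into pull-backs of bundles over $S$ and $\overline{S}$, it preserves the connections induced by $\mathcal{D}^c$ on each factor, in particular the two projective co-tractor connections along the leaves and the connections induced by $\nabla^c$ on $\mathcal{L}^{1,0}$ and $\mathcal{L}^{0,1}$, which is the assertion of the lemma.

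The \emph{main obstacle} here is bookkeeping rather than conceptual: everything is defined only on a neighbourhood of $S_{\mathbb{R}}$ and only for small flow parameter $t$, so one must shrink the domains of $V^c$, $\Phe$, $\mathcal{L}^c$ and $\mathcal{D}^c$ simultaneously so that all the identifications above make sense on a common neighbourhood; once this is arranged, the identity-theorem step and the verification that the product structure is flow-invariant are routine.
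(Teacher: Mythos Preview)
Your argument is correct, and the overall strategy---extend holomorphically off $S_{\mathbb{R}}$ and then verify invariances via the identity theorem---works. The paper takes a closely related but more constructive route: it uses the explicit model $S^c\cong S\times\overline{S}$ and \emph{defines} the extended flow as the product flow $\Ph\times\overline{\Ph}$. With this definition, preservation of the product structure (and hence of the splittings of complexified bundles) is tautological rather than something to be checked, and preservation of the complexified twisted tractor connection follows directly from the fact that the complexified connection is itself obtained as the product of the original connection and its conjugate, each of which is preserved by the corresponding factor of the flow. Your approach instead extends $V$ by analytic continuation and then appeals to the identity principle to compare $(\Phe)_*\mathcal{D}^c$ with $\mathcal{D}^c$; this is slightly less direct but perhaps more transparently justified, and it makes explicit the r\^ole of the uniqueness of holomorphic extensions that the paper's phrase ``by the properties of complexifications'' leaves implicit. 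The two arguments are equivalent---the product flow \emph{is} the holomorphic extension of $\Ph$---so the difference is one of emphasis rather than substance.
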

\begin{proof}
Recall that the complexification of a complex manifold with coordinates $z_i$, $\overline{z_i}$ can be defined by replacing conjugated coordinates $\overline{z_i}$ by independent complex coordinates $\tilde{z_i}$, and the real submanifold is then defined by the requirement $\tilde{z_i}=\overline{z_i}$. In this setting $S^c$ is locally a product $S\times\overline{S}$. Then we can define a local flow on $S^c$ (for simplicity also denoted by $\Ph$), by using the flow on $S$ (and hence on $\overline{S}$) and the product structure. By definition, this flow preserves the product structure on any bundle arising from a complexification. Moreover, by the properties of complexifications of connections and vector bundles, the flow defined in this way preserves the complexified tractor connection.
\end{proof}
 \subsection{Extending symmetries from c-projective manifolds}
 
Recall that any vector field on a manifold defines a vector field on its tangent bundle and on any associated bundle in a natural way (see section \ref{comp}). Denote by $V^c$ the natural vector field defined by a complexification of $V$ (as a holomorphic extension to $S^c$) on the complexified tractor bundle. 
As it is induced by a vector field that is a complexification itself,  $V^c$ preserves the product decomposition of the complexified tractor bundle, hence it induces a vector field on the components. 
Recall that in the generalized Feix--Kaledin construction we consider bundles of so called affine sections (see Section \ref{FK}). Now we will show that $V^c$ defines vector fields on the total spaces of these bundles.
 \begin{proposition}\label{Flowaff}
For small time $t$, the local flow of $V^c$ maps affine sections along leaves into affine sections. Hence $V^c$ descends to a vector field on the total space of the bundles of affine sections along leaves.
 \end{proposition}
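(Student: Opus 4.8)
The plan is to trace through the definition of ``affine section'' and check that each ingredient in it is preserved by the flow $\Phi_t^{V^c}$. Recall from Section \ref{FK} that an affine section over a leaf $x$ (say a leaf of the $(1,0)$-foliation, so that $x$ is a point of $S^{0,1}\cong\overline{S}$) is a section of the twisted projective co-tractor bundle restricted to $x$ which is parallel for the tensor-product connection of the complexified projective co-tractor connection along $x$ with the connection induced by $\nabla^c$ on $\mathcal{L}^{1,0}$. So an affine section is determined by two pieces of data: the leaf $x$ it lives over, and the property of being flat for a specific connection along $x$. I would first invoke the previous lemma, which says $V^c$ preserves the product decomposition of $S^c$ and hence preserves each of the two foliations; therefore the flow $\Phi_t^{V^c}$ (covering the flow $\Phi_t^V$ on the leaf space via the product structure) maps leaves to leaves, and so carries the fibre of the bundle of affine sections over $x$ into the fibre over $\Phi_t^V(x)$, at least for $|t|$ small enough that the flow is defined on a neighbourhood of the relevant compact piece of the leaf.

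Next I would check that flatness is preserved. By the corollary to Lemma \ref{pD} together with the lemma just cited, the flow $\Phi_t^{V^c}$ preserves the complexified twisted tractor connection, and by the discussion following the corollary it also preserves the connections induced on the pull-back factors, in particular the tensor-product connection whose parallel sections along a leaf are the affine sections. Hence if $s$ is a section of the twisted co-tractor bundle over $x$ with $\mathcal{D}^{\text{tw}} s = 0$ along $x$, then the pushed-forward section $(\Phi_t^{V^c})_* s$ over $\Phi_t^V(x)$ satisfies $\mathcal{D}^{\text{tw}}\big((\Phi_t^{V^c})_* s\big) = (\Phi_t^{V^c})_*\big(\mathcal{D}^{\text{tw}} s\big) = 0$ along $\Phi_t^V(x)$ --- this is just naturality of the covariant derivative under a connection-preserving diffeomorphism. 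So affine sections go to affine sections. Differentiating in $t$ at $t=0$ then produces a well-defined vector field on the total space of $\mathcal{A}^{1,0}$ (and symmetrically $\mathcal{A}^{0,1}$) whose projection to the leaf space is $V$; this is the content of the ``hence'' in the statement. The same argument applies verbatim to $\mathcal{V}^{1,0}$ and $\mathcal{V}^{0,1}$ since these are built from $\mathcal{A}^{1,0}$, $\mathcal{A}^{0,1}$ and the line bundle factors by operations ($\otimes$, dual) that are natural under connection-preserving maps.

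The main point requiring care --- and what I expect to be the real obstacle --- is the ``for small time $t$'' clause: the flow $\Phi_t^{V^c}$ is only locally defined on $S^c$, and a single affine section lives over an entire (non-compact) leaf, so one must argue that for $t$ in a small enough interval the flow is simultaneously defined along all of a relevant piece of the leaf and sends it biholomorphically onto a piece of the image leaf. This is a standard compactness/uniformity argument for flows on a trivial family over a leaf space (the leaves being biholomorphic to $S$, and the flow being a product flow), but it is the step where one cannot simply quote naturality and must instead control the domain of definition; I would phrase it by fixing a point, choosing a relatively compact neighbourhood in the leaf space, and shrinking $\epsilon$ as in Section \ref{comp}. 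Everything else is naturality of pushforward for connection-preserving diffeomorphisms applied to the connections already shown to be preserved.
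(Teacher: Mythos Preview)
Your proposal is correct and follows essentially the same approach as the paper: fix a leaf, note that the flow carries it to another leaf, and use the invariance of the (twisted) tractor connection under the flow (Lemma~\ref{pD} and its corollary) to conclude that a parallel section is pushed forward to a parallel section. Your treatment is in fact slightly more careful than the paper's, which glosses over the twisted-versus-untwisted distinction and the domain-of-definition issue you flag in your final paragraph.
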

  \begin{proof}
Let us fix a leaf $x_0$ of the $(0,1)$ foliation and an affine section $l(x_0,\tilde{x})$ of $\mathcal{L}^{1,0}$ along this leaf. By definition of $V^c$ we know that $\Phe [\left(\begin{array}{c}l\\ Dl\end{array}\right)_D]$ is a section of the leaf $\Ph (x_0)$ for $t$ small enough (we may also have to restrict a leaf to some open subset). 
By the definition of the push-forward we have 
$$(\Ph)_*\mathcal{D}(\Phe [\left(\begin{array}{c}l\\ Dl\end{array}\right)_D])=\mathcal{D}[\left(\begin{array}{c}l\\ Dl\end{array}\right)_D].$$ 
 Therefore, as by Lemma \ref{pD}, $(\Ph)_*\mathcal{D}=\mathcal{D}$ then along the leaf $\Ph (x_0)$
$$ \mathcal{D}\Phe[\left(\begin{array}{c}l\\ Dl\end{array}\right)_D]=0,$$ which implies that the flow descends to the bundle of affine sections which finishes the proof.
 \end{proof}
Recall that in the generalized Feix--Kaledin construction we construct the twistor space of a quaternionic manifold as a gluing of bundles $\mathcal{V}^{1,0}$ and $\mathcal{V}^{0,1}$ which are dualisations of bundles of affine sections of $\mathcal{L}^{1,0}$ and $\mathcal{L}^{0,1}$ along the corresponding leaves tensored by $\mathcal{L}^{0,1}$ and $\mathcal{L}^{1,0}$ (see Section \ref{FK}). Therefore the above proposition shows that the vector fields induced by $V^c$ on the complexified tractor bundle, on  $\mathcal{L}^{1,0}$ and on $\mathcal{L}^{0,1}$, define vector fields on $\mathcal{V}^{1,0}$ and $\mathcal{V}^{0,1}$.
 \begin{definition}
We denote by $ V^c_{1,0}$ the natural vector field on $\mathcal{V}^{1,0}$ and by  $ V^c_{0,1}$ the natural vector field on $\mathcal{V}^{0,1}$ defined by  $V^c$.
To simplify the notation we will denote the flows of natural vector fields on $\mathcal{V}^{1,0}$,   $\mathcal{V}^{0,1}$ and on $\mathbb{P}((\mathcal{L}^c)^*)$  by $\Phe$.
 \end{definition}
 
 \begin{proposition}
 $ V^c_{1,0}$ and $V^c_{0,1}$ glue to a vector field $\mathcal{V}$ on the twistor space.
 \end{proposition}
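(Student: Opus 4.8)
The plan is to show that the two locally defined vector fields $V^c_{1,0}$ and $V^c_{0,1}$ agree on the overlap region of $\mathcal{V}^{1,0}$ and $\mathcal{V}^{0,1}$ inside the twistor space $Z$, where the gluing is effected by the evaluation isomorphisms $\phi_{1,0}$ and $\phi_{0,1}$ together with the embedding into $\mathbb{P}(\mathcal{L}^c)$. The key point is that all three objects --- the vector field on the complexified twisted tractor bundle, the induced vector field on $\mathbb{P}((\mathcal{L}^c)^*)$, and the vector fields $V^c_{1,0}$, $V^c_{0,1}$ --- ultimately come from a single source, namely the holomorphic extension $V^c$ of $V$ to $S^c$ and its canonical lift to associated bundles. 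Naturality of that lift is the engine of the whole argument.

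First I would recall that by Proposition \ref{Flowaff} the flow $\Phe$ of $V^c$ preserves affine sections along leaves, so it genuinely descends to $\mathcal{A}^{1,0}$ and $\mathcal{A}^{0,1}$, and hence (by dualizing and tensoring with the $V^c$-equivariant bundles $\mathcal{L}^{0,1}$, $\mathcal{L}^{1,0}$, whose connections are preserved by the previous lemma) to $\mathcal{V}^{1,0}$ and $\mathcal{V}^{0,1}$; this gives $V^c_{1,0}$ and $V^c_{0,1}$. Separately, $V^c$ lifts canonically to the line bundle $\mathcal{L}^c$ over $S^c$, hence to its projectivization $\mathbb{P}(\mathcal{L}^c)$ (and to $\mathbb{P}((\mathcal{L}^c)^*)$); call this lift $\mathcal{W}$. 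Next I would check that the evaluation maps $\phi_{1,0}\colon \mathcal{L}^{0,1}(1)\otimes [\mathcal{L}^{1,0}(1)]^*\to\mathcal{V}^{1,0}$ and $\phi_{0,1}$ are equivariant with respect to the relevant lifts: $\phi_{1,0}$ is defined by $(x,y,l)\mapsto [y, s\mapsto s(x)\otimes l]$, and since the flow $\Phe$ acts compatibly on the leaf variables $x,y$, on the affine section $s$ (it sends affine sections to affine sections and commutes with evaluation at the point, because evaluation at $x$ is just restriction, which is natural under a flow that moves $x$ to $\Ph(x)$), and on $l$, one gets $\Phe\circ\phi_{1,0}=\phi_{1,0}\circ\Phe$, and likewise for $\phi_{0,1}$. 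Hence under the identification of (open subsets of) $\mathcal{V}^{1,0}$ and $\mathcal{V}^{0,1}$ with complementary affine subbundles of $\mathbb{P}(\mathcal{L}^c)$ via $\phi_{1,0}^{-1}$, $\phi_{0,1}^{-1}$ and these embeddings, both $V^c_{1,0}$ and $V^c_{0,1}$ pull back to the same vector field $\mathcal{W}$ on the overlap. Therefore they glue to a single holomorphic vector field $\mathcal{V}$ on $Z$.

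The main obstacle I expect is bookkeeping the various twists and duals consistently: one must track precisely how $V^c$ acts on $\mathcal{A}^{1,0}\otimes\mathcal{L}^{0,1}$ before dualizing to get $\mathcal{V}^{1,0}$, and verify that the resulting action matches the one obtained by restricting $\mathcal{W}$ through the chain of identifications $\mathcal{V}^{1,0}\xleftarrow{\phi_{1,0}}\mathcal{L}^{0,1}(1)\otimes[\mathcal{L}^{1,0}(1)]^*\hookrightarrow \mathbb{P}(\mathcal{L}^c)$. This is really a compatibility-of-canonical-lifts statement: the canonical lift of $V^c$ commutes with tensor product, dualization, and projectivization of equivariant bundles, so once the equivariance of $\phi_{1,0}$ and $\phi_{0,1}$ is established (which reduces to the fact, already used in Proposition \ref{Flowaff}, that pushing forward an affine section along $\Ph$ and then evaluating equals evaluating and then pushing forward), the identification on the overlap is automatic. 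A secondary point, which is routine, is that one must restrict to a sufficiently small time interval and to the open submanifolds of $\mathcal{V}^{1,0}$, $\mathcal{V}^{0,1}$ chosen in the construction of $Z$ so that the flows are defined and the gluing region is preserved; holomorphicity of $\mathcal{V}$ is inherited from holomorphicity of $V^c$ and of the canonical lifts.
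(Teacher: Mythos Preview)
Your proposal is correct and follows essentially the same approach as the paper: both arguments reduce the gluing to showing that the evaluation maps $\phi_{1,0}$ and $\phi_{0,1}$ intertwine the natural flows induced by $V^c$, so that on the overlap both $V^c_{1,0}$ and $V^c_{0,1}$ coincide with the push-forward of the canonical lift of $V^c$ to $\mathbb{P}((\mathcal{L}^c)^*)$. The paper carries out this equivariance check slightly more explicitly (verifying directly that $s_t(f_t)=s(f)$ via the definition of the dual flow and that $f_t(x_t)$ is the flow on $\mathcal{L}^{1,0}$), whereas you phrase it as naturality of canonical lifts under tensor, dual, and evaluation; these are the same computation.
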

\begin{proof}

First we will show that on the image of $\phi$ (see Section \ref{FK}), the vector fields $V^c_{1,0}$ and $V^c_{0,1}$ are equal to the push forward of the natural vector field induced by $V$ on $\mathbb{P}((\mathcal{L}^c)^*)$ by the isomorphism $\phi$ . To see this let us fix $s\in  \mathcal{V}^{1,0}$ and suppose that $s=\phi_{1,0}(x_0,\tilde{x}_0, l_0\otimes\tilde{l}_0)$. We will show that the image by $\phi_{1,0}$ of the local flow $\Phe (x_0,\tilde{x}_0, l_0\otimes\tilde{l}_0)$ is the local flow $\Phe (s)$. Recall that $\phi_{1,0}(x,\tilde{x}, l\otimes\tilde{l})$ is a functional on the space of affine sections on the leaf $\tilde{z}=\tilde{x}_0$ (where $z_i,\tilde{z}_j $ are coordinates on $S^c=S\times\overline{S}$)  which maps affine sections $f$ into $f(x_0)\otimes l\otimes\tilde{l}\in\mathbb{C}$. By definition, the flow $s_t:=\Phe (s)$ is the curve of functionals such that for any affine function $f$ on the leaf $\tilde{x}_0$ we have that $s_t(f_t)=s(f)$, where $f_t$ is the flow of $f$. As we have seen in Proposition \ref{Flowaff}, this is well defined as for fixed $t$ all $f_t$ are affine functions on the leaf $\tilde{x}_t$. Now observe that also by definition $f_t(x_t)$ is the flow of the natural vector field on $\mathcal{L}^{1,0}$ induced by $V^c$. This proves the claim as if $l_t$ is the natural flow on $\mathcal{L}^{1,0}$ induced by $V^c$ and $r_t$  is the flow of the natural vector field on $(\mathcal{L}^{1,0})^*$ induced by $V^c$ with $r_0=\frac{a}{l_0}$ for some $a\in\mathbb{C}$ then $t_t\otimes r_t=\underline{a}$.
\end{proof}
Recall that we assume that $n>1$.
 \begin{theorem}\label{csym}
 Let $V$ be a c-projective symmetry on a $2n$-dimensional c-projective manifold $(S,J,[D]_c)$, and suppose that $V$ also preserves a connection $\nabla$ on a bundle $\mathcal{L}$ associated to the tangent bundle. Then $V$ extends from the submanifold $S$ to a quaternionic symmetry $\mathcal{V}$ on the quaternionic manifold $M$ obtained by the generalized Feix--Kaledin construction from $(S,J,[D]_c, \mathcal{L},\nabla)$.
 \end{theorem}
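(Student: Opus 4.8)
The construction of the vector field $\mathcal V$ on the twistor space $Z$ of $M$ has essentially been completed in the propositions above; what remains is to check that $\mathcal V$ is a genuine infinitesimal quaternionic symmetry and that the quaternionic vector field it induces on $M$ restricts to $V$ along $S$. The plan is: verify that $\mathcal V$ is holomorphic and $\tau$-invariant, invoke the twistor correspondence of Section \ref{twist}, and then follow the flow back down to $S$.

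First I would record that $\mathcal V$ is holomorphic and is defined on a neighbourhood of the real twistor lines lying over $S_{\mathbb{R}}$. Indeed $V^c$ is a holomorphic vector field on $S^c$ (Section \ref{comp}), and the operations used to pass from $V^c$ to $\mathcal V$ --- forming affine sections along leaves, dualising, tensoring with $\mathcal L^{0,1}$ and $\mathcal L^{1,0}$, and gluing along $\phi_{1,0},\phi_{0,1}$ (Section \ref{FK}) --- are all holomorphic; since $V^c$ is only defined near $S_{\mathbb{R}}$ and we use the flow for small time, $\mathcal V$ is defined precisely on the part of $Z$ relevant to $M$.

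The main point is that $\mathcal V$ is invariant under the real structure $\tau$. For this one unwinds the way $\tau$ is assembled in Section \ref{FK}: from the anti-holomorphic involution $\theta$ of $S^c$ together with fibrewise $-\id$ on $\mathbb P(\mathcal L^c)$, transported to the gluing of $\mathcal V^{1,0}$ and $\mathcal V^{0,1}$ via $\phi_{1,0},\phi_{0,1}$. Since $V$ is real and real-analytic, in the model $S^c = S\times\overline S$ its complexification is the pair $(V,\overline V)$, which is $\theta$-related to itself because $\theta$ swaps the two factors; hence the vector field induced by $V^c$ on each complexified bundle intertwines the anti-holomorphic isomorphisms coming from $\theta$, this being precisely the naturality of complexification recalled in Section \ref{comp}. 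Composing fibrewise with $-\id$ does not spoil this, because $-\id$ is linear and the flow it induces on a total space commutes with the natural lift of any bundle morphism. Therefore $\Phe$ on $Z$ is $\tau$-equivariant, that is, $\mathcal V$ is $\tau$-invariant.

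It remains to conclude. The small-time flow $\Phe$ of $\mathcal V$ on $Z$ consists of $\tau$-equivariant biholomorphisms, which carry real twistor lines (submanifolds with normal bundle $\mathcal O(1)\otimes\mathbb C^{2n}$, $\tau$-invariant, with $\tau$ fixed-point-free on them) to real twistor lines; hence, as in Section \ref{twist}, it descends to a local one-parameter group of quaternionic automorphisms of $M$ near $S$, whose generator is an infinitesimal quaternionic symmetry that we again call $\mathcal V$. Finally, to identify $\mathcal V|_S$, observe that the complexified flow on $S^c$ preserves the diagonal $S_{\mathbb{R}}$ and restricts there to the original flow $\Ph$ of $V$ on $S$; tracing this through the construction shows that $\Phe$ on $Z$ sends the twistor line $L_p$ over $p\in S = S_{\mathbb{R}}$ to $L_{\Ph(p)}$, so the induced flow on $M$ preserves $S$ and restricts on it to $\Ph$. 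Differentiating at $t=0$ gives that $\mathcal V$ is tangent to $S$ with $\mathcal V|_S = V$, so $\mathcal V$ is the required extension. I expect the bookkeeping in the $\tau$-invariance step --- tracking the real and anti-holomorphic structures through the chain of affine sections, duals, tensor products and gluing --- together with checking that $\Phe$ on $Z$ matches $\Ph$ on the family of real twistor lines over $S_{\mathbb{R}}$, to be the only real obstacle; the geometric content of each step is clear, but the identifications must be made with care.
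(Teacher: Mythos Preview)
Your proposal is correct and follows essentially the same route as the paper: holomorphicity of $\mathcal V$, $\tau$-invariance (both coming from complexification), descent to a quaternionic symmetry on $M$ via the twistor correspondence of Section~\ref{twist}, and identification of $\mathcal V|_S$ with $V$. The paper's own proof is considerably more terse---it dispatches $\tau$-invariance in one clause (``both the real structure and the vector field come from complexification'') and the restriction in one sentence---whereas you unpack these steps carefully; the only ingredient you phrase slightly differently is the paper's ``transversal to twistor lines'' condition, which you capture via the flow carrying real twistor lines to real twistor lines.
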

 \begin{proof}
 By construction $\mathcal{V}$ is a holomorphic vector field on the twistor space of $M$ which is transversal to twistor lines that are close enough to images of fibers of $\mathbb{P}((\mathcal{L}^c)^*)$. It is also invariant under the real structure, as both the real structure and the vector field come from complexification (in a natural way). Hence it corresponds to a quaternionic symmetry on $M$ (see section \ref{twist}).
 
By the definition of  $\mathcal{V}|_S$, the induced vector field on $M$ coincides with $V$ along $S$.
 \end{proof}
 \begin{cor}
The algebra of quaternionic symmetries of the quaternionic manifold $M$ obtained by the generalized Feix--Kaledin construction from $(S,J,[D]_c, \mathcal{L},\nabla)$ contains a subalgebra consisting of those c-projective symmetries that preserve $\nabla$.
 
In particular, if $(\mathcal{L},\nabla)$ is trivial then the algebra of local quaternionic symmetries of the quaternionic manifold obtained from generalized Feix--Kaledin construction admits a subalgebra isomorphic to the c-projective symmetry algebra on the submanifold. 
 \end{cor}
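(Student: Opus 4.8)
The plan is to deduce the corollary directly from Theorem~\ref{csym} by upgrading the pointwise statement ``each symmetry extends'' to the statement ``the assignment $V\mapsto\mathcal V$ is a Lie algebra monomorphism.'' First I would observe that the c-projective symmetries of $(S,J,[D]_c)$ which preserve $\nabla$ form a Lie subalgebra $\mathfrak a$ of the full c-projective symmetry algebra: if $\mathcal L_V\nabla=\mathcal L_W\nabla=0$ then $\mathcal L_{[V,W]}\nabla=[\mathcal L_V,\mathcal L_W]\nabla=0$, and $[V,W]$ automatically preserves $J$ and $[D]_c$. By Theorem~\ref{csym} we therefore get a well-defined map $\Psi\colon\mathfrak a\to \mathfrak q(M)$ into the Lie algebra of local quaternionic symmetries of $M$, $V\mapsto\mathcal V$.

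Next I would verify that $\Psi$ is a homomorphism of Lie algebras, by tracing naturality through the steps of Section~\ref{c-p}. The holomorphic extension $V\mapsto V^c$ to $S^c$ is $\R$-linear and bracket-preserving, since real-analytic extension commutes with the Lie bracket; the passages $V^c\mapsto V^c_{1,0}$ and $V^c\mapsto V^c_{0,1}$ to the natural (complete) lifts on $\mathcal V^{1,0}$ and $\mathcal V^{0,1}$ are instances of the standard functorial lift of vector fields to associated bundles, hence again $\R$-linear and bracket-preserving; and the gluing of $V^c_{1,0}$ with $V^c_{0,1}$ into $\mathcal V$ on $Z$ is compatible with brackets because on the overlap the two fields are identified via $\phi_{1,0},\phi_{0,1}$, which are the fixed gluing isomorphisms of the construction and do not depend on the symmetry. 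Finally, the correspondence of Section~\ref{twist} between quaternionic symmetries of $M$ and $\tau$-invariant holomorphic vector fields on $Z$ transversal to the real twistor lines is itself a Lie algebra isomorphism onto its image, so $\Psi$ is a Lie algebra homomorphism and $\Psi(\mathfrak a)$ is a subalgebra of $\mathfrak q(M)$; this proves the first assertion.

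For the ``in particular'' statement I would note that when $(\mathcal L,\nabla)$ is the trivial line bundle with the flat connection, every c-projective symmetry $V$ of $S$ lifts canonically to a symmetry of $(\mathcal L,\nabla)$ acting trivially on the fibre, so $\mathfrak a$ is the entire c-projective symmetry algebra of $S$. Injectivity of $\Psi$ is immediate from the last line of the proof of Theorem~\ref{csym}: $\mathcal V|_S=V$, so $\mathcal V=0$ forces $V=0$. Hence $\Psi$ restricts to an isomorphism of the c-projective symmetry algebra of $S$ onto a subalgebra of the local quaternionic symmetry algebra of $M$. The only genuinely delicate point is the gluing step---checking that $V^c_{1,0}$ and $V^c_{0,1}$ agree as vector fields on the overlap and that the bracket acquires no correction terms---but this was essentially already established in the proposition preceding Theorem~\ref{csym}, and since every operation used ($V^c$, the lifts, the descent to the twistor space, and the twistor-to-quaternionic dictionary) is a natural operation on vector fields, the verification reduces to the functoriality statements just cited and is routine bookkeeping rather than a real obstacle.
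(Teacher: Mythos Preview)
Your proposal is correct. In the paper the corollary is stated without proof, as an immediate consequence of Theorem~\ref{csym}; you have supplied the natural details the authors leave implicit---that the $\nabla$-preserving c-projective symmetries form a Lie subalgebra, that the extension $V\mapsto\mathcal V$ is a Lie algebra homomorphism by naturality of each step (holomorphic extension, functorial lift to associated bundles, gluing, twistor correspondence), and that injectivity follows from $\mathcal V|_S=V$. Your treatment of the trivial $(\mathcal L,\nabla)$ case is exactly the intended reading. The only comment is that your argument is more careful than the paper's own exposition, not different from it: the authors evidently regard these checks as routine, and your justification confirms that they are.
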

 
 Let $(\mathcal{L},\nabla)=(\mathcal{O}(-1),\nabla^D)$, where $\nabla^D$ is the connection induced from a real analytic connection $D$ in the c-projective class with type $(1,1)$ curvature. Recall that in this case the generalized Feix--Kaledin construction is just the hypercomplex Feix--Kaledin construction for $(S,J,D)$.
 \begin{proposition}
 In the case of the hypercomplex Feix--Kaledin construction, all symmetries $\mathcal{V}$ obtained as extensions of c-projective symmetries preserving $D$ are hypercomplex.
 \end{proposition}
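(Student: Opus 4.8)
The plan is to realise $\mathcal{V}$ as a holomorphic vector field on the twistor space $Z$ of $M$ and to show that it is tangent to the fibres of the holomorphic projection $p\colon Z\to\mathbb{CP}^{1}$ that exists because $M$ is locally hypercomplex; this is exactly the hypercomplex condition. Indeed, by Theorem~\ref{csym} the flow of $\mathcal{V}$ is a quaternionic symmetry, so it carries real twistor lines to real twistor lines, and under the identification $Z=\mathcal{Q}$ a real twistor line meets each fibre $p^{-1}(\lambda)$ in the single point which is the value at the corresponding point of $M$ of the integrable complex structure labelled by $\lambda$; hence if the flow preserves every fibre of $p$ it fixes every section of $\mathcal{Q}$, in particular $I,J,K$, i.e. $\mathcal{L}_{\mathcal{V}}I=\mathcal{L}_{\mathcal{V}}J=\mathcal{L}_{\mathcal{V}}K=0$.

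The first step is to locate $p$ inside the Feix--Kaledin picture. Since $\mathcal{L}=\mathcal{O}(-1)$, the tractor twist gives $\mathcal{L}(1)=\mathcal{L}\otimes\mathcal{O}(1)=\End(\mathcal{O}(-1))$, which is \emph{canonically} trivial, and with $\nabla=\nabla^{D}$ its induced connection is the flat one for which the identity section is parallel. Complexifying, the line bundles $\mathcal{L}^{1,0}(1)$ and $\mathcal{L}^{0,1}(1)$ over $S^{c}$ are canonically trivial, and hence so are the two complementary affine-chart bundles $\mathcal{L}^{0,1}(1)\otimes[\mathcal{L}^{1,0}(1)]^{*}$ and $\mathcal{L}^{1,0}(1)\otimes[\mathcal{L}^{0,1}(1)]^{*}$ of $\mathbb{P}(\mathcal{L}^{c})$. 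Thus $\mathbb{P}(\mathcal{L}^{c})=S^{c}\times\mathbb{CP}^{1}$ canonically near $S_{\mathbb{R}}$, and the gluing of $\mathcal{V}^{1,0}$ and $\mathcal{V}^{0,1}$ by $\phi_{1,0},\phi_{0,1}$, being an isomorphism away from the two sections $S^{c}\times\{0\}$ and $S^{c}\times\{\infty\}$ (which are carried to those two points of $\mathbb{CP}^{1}$), lets the second projection of $S^{c}\times\mathbb{CP}^{1}$ descend to a holomorphic map $p\colon Z\to\mathbb{CP}^{1}$; comparing with the original Feix construction \cite{Feix,BC} one checks that the $\mathbb{C}^{*}$-action covers the standard $\mathbb{C}^{*}$-action on $\mathbb{CP}^{1}$ and the real structure covers the antipodal map, so $p$ is the $\mathbb{CP}^{1}$-fibration of the hypercomplex structure.

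The main step is the computation of $dp(\mathcal{V})$. On $\phi_{1,0}(\mathcal{V}^{1,0})$ the field $\mathcal{V}$ is, by the construction gluing $V^{c}_{1,0}$ and $V^{c}_{0,1}$, the $\phi_{1,0}$-image of the natural lift of $V^{c}$ to $\mathcal{L}^{0,1}(1)\otimes[\mathcal{L}^{1,0}(1)]^{*}$ (and symmetrically on $\phi_{0,1}(\mathcal{V}^{0,1})$). But the natural lift of any complexified diffeomorphism acts on $\End(\mathcal{O}(-1))=\mathcal{L}(1)$ by conjugation, hence fixes the identity section; so $V^{c}$ preserves the canonical trivialisations of $\mathcal{L}^{1,0}(1)$ and $\mathcal{L}^{0,1}(1)$, and the natural lift of $V^{c}$ to the trivial bundle $\mathcal{L}^{0,1}(1)\otimes[\mathcal{L}^{1,0}(1)]^{*}=S^{c}\times\mathbb{C}$ is simply $(V^{c},0)$, with no component along the $\mathbb{C}$-factor. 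Transporting to $Z$ through $\phi_{1,0}$ and $\phi_{0,1}$ and recalling that $p$ is the $\mathbb{CP}^{1}$-coordinate, this yields $dp(\mathcal{V})=0$ on $\phi_{1,0}(\mathcal{V}^{1,0})\cup\phi_{0,1}(\mathcal{V}^{0,1})=Z$. Hence the flow of $\mathcal{V}$ preserves every fibre of $p$, and by the first paragraph $\mathcal{V}$ is a hypercomplex symmetry.

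The step I expect to be the real obstacle is the first one: tracking carefully how the two sections of $\mathbb{P}(\mathcal{L}^{c})$ are modified in passing to $Z$, and checking that the resulting $p$ is genuinely the hypercomplex $\mathbb{CP}^{1}$-fibration rather than merely some holomorphic map to $\mathbb{CP}^{1}$ — this is where the comparison with \cite{Feix},\cite{BC} is needed. A useful consistency check is the same computation for the $S^{1}$-generator $v$: on the chart $v$ is fibrewise scalar multiplication, i.e. the Euler field along the $\mathbb{C}$-factor, so $dp(v)\neq 0$ — correctly recording that the $S^{1}$-action is not hypercomplex, the only difference from $\mathcal{V}$ being that the natural lift of $V^{c}$ acts trivially on that $\mathbb{C}$-factor whereas the Euler field does not.
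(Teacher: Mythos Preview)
Your proof is correct and follows essentially the same route as the paper's. Both arguments use the canonical triviality of $\mathcal{L}(1)=\mathcal{O}(-1)\otimes\mathcal{O}(1)$ to identify the hypercomplex projection $p$ on $Z$, then observe that the natural lift of $V^{c}$ has no component along the trivial fibre direction, so the flow of $\mathcal{V}$ preserves the fibres of $p$. The paper phrases this as ``the flows on $\mathcal{V}^{1,0}$, $\mathcal{V}^{0,1}$ are constant on constant functions'' and identifies $p$ as evaluation on constant sections; your description of $p$ via the second projection of $S^{c}\times\mathbb{CP}^{1}$ through $\phi_{1,0},\phi_{0,1}$ is the same map (evaluating $\phi_{1,0}(x,y,l)$ on the constant section $1$ returns exactly the fibre coordinate $l$). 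Your justification that the natural lift acts on $\End(\mathcal{O}(-1))$ by conjugation and hence fixes the identity is a crisper way of saying what the paper leaves as ``by definition the flows are constant lifts'', and the consistency check with the $S^{1}$-generator is a nice addition not in the paper.
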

 \begin{proof}
As $\mathcal{L}^{0,1}(1)$ and $ \mathcal{L}^{1,0}(1)$ are trivial in this case, by definition the flows of natural vector fields induced by $V^c$ on them are constant lifts of flows of $V^c$. Also $\mathcal{V}^{1,0}$ and $\mathcal{V}^{0,1}$ are bundles dual to bundles of affine functions, and the flows on them are constant on constant functions. Since the hypercomplex projection on the twistor space is given by evaluation on constant sections, it follows that the flow of $\mathcal{V}$ is tangent to the Obata distribution (i.e to the fibers of the projection).
 \end{proof}
 \begin{remark}\label{rem4}
 In the case when $S$ is equipped with the flat c-projective structure and $(\mathcal{L},\nabla)=(\mathcal{O}(-1),\nabla^D)$ for $D$ flat, the obtained hypercomplex manifold is locally equivalent to flat quaternionic structure which can also be obtained from $S$ using trivial $(\mathcal{L},\nabla)$. Therefore in this case the requirement that $V$ preserves $D$ for extending c-projective symmetries is not necessary. Note, however, that in this case it is a necessary and sufficient condition for extending c-projective symmetries to hypercomplex symmetries. 
 
 In general we are unable to extend all c-projective symmetries (i.e., the ones that do not preserve $\nabla$) when $\nabla$ is not trivial. For example in the case of grassmannian $Gr_2(\mathbb{C}^{4})$, the dimension of all quaternionic symmetries is $15$ (and one of them is the $S^1$ action, whereas it is constructed from $S$ equipped with the flat c-projective structure (and ($\mathcal{L}^{-\frac{1}{2}},$ $\nabla$) induced by the Fubini-Study metric), which has dimension of c-projective symmetries equal to $16$. We expect that the flatness of $\nabla$ is the only obstruction for the condition that $V$ preserves $\nabla$ to be necessary.
 
 Note that the $S^1$ action provides an additional symmetry on $M$ which is independent from the symmetries obtained as extensions of c-projective symmetries from $S$ (as $S$ is the fixed points set of the action).
 
 Finally we would like to note that, although we proved the result for $n>1$, the generalized Feix--Kaledin construction works also for case $n=1$ and in this case the structure that we have on $S$ is a conformal Cartan connection (c-projective structure on $S$ is not enough data). As the methods we used to prove Theorem \ref{csym} are also based on Cartan connections, the proof will extend to the case $n=1$. We skip the details as they are very technical, and also because studies of submaximal symmetry dimension in \cite{KWZ} was done only for $n>1$. 
 \end{remark}

 \section{Quaternionic symmetries}\label{quat}
 
  In this section we further discuss the properties of the algebra of quaternionic symmetries of some manifolds arising by the generalized Feix--Kaledin construction. 
We start by showing that any submaximally symmetric quaternionic manifold arise locally in this way.
\subsection{Submaximal quaternionic structures}
In this subsection we consider $n>1$.
\begin{proposition}
	Let $(M,Q)$ be a sub-maximal quaternionic structure of dimension $4n$ and $x\in M$ a regular point. Then the isotropy algebra $\iso_x$ contains a subalgebra $K$ isomorphic to $\u(2)\simeq \u(1)\oplus\sp(1)$ for which all elements act with purely imaginary spectrum on the isotropy representation $\m$.  
	Moreover there exists an element $V\in K$ such that $V$ acts trivially on a subspace $\s\subset \m$ with $\dim \s = 2n$
\end{proposition}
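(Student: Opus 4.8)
The plan is to leverage the structure theory already developed in the excerpt for the submaximal quaternionic model from \cite{KWZ}, combined with the classification-type input that the submaximal quaternionic structure (for $n>1$) is known at least locally up to the data given in Section 5 of \cite{KWZ}. The key point is that we do not need to re-derive the submaximal model; we only need to extract from its isotropy algebra $\h$ a suitable copy of $\u(2)$ and a suitable element $V$ with a $2n$-dimensional kernel on $\m$.

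\medskip

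\emph{Step 1: Locate $\u(2)$ inside the isotropy algebra.} Recall from the discussion preceding Proposition~2 (the normalizer proposition) that the isotropy algebra of the submaximal model has the form
$$
\h=\sp(1)\oplus\bigl(\so(2)\oplus\R\oplus\mathfrak{gl}(n-2,\H)\bigr)\ltimes\mathfrak{heis}(8n-12,\H),
$$
and that the $\sp(1)$ ideal acts as a linear quaternionic structure on $\m\simeq\H^n$, hence with purely imaginary spectrum. Inside the reductive part one has the abelian factor $\so(2)\oplus\R$; I would take $K$ to be the span of $\sp(1)$ together with a one-dimensional subalgebra of $\so(2)\oplus\R$ chosen so that it acts on $\m$ by a semisimple operator with purely imaginary eigenvalues (this is exactly the element $v_{\so(2)\oplus\sp(n-2)}$ constructed in the excerpt, which acts as a linear almost complex structure, or an appropriate multiple thereof). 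Since $\sp(1)$ is an ideal, this span is a subalgebra, and it is isomorphic to $\u(1)\oplus\sp(1)\simeq\u(2)$. The fact that every element of $K$ acts with purely imaginary spectrum follows because $K$ is a compact (unitary) Lie algebra represented by quaternionic-linear-plus-complex-linear operators on $\m$; more concretely, each element is a real linear combination of the quaternionic structure operators and the almost complex structure $v_{\so(2)\oplus\sp(n-2)}$, all of which square to negative multiples of the identity on their respective blocks.

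\medskip

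\emph{Step 2: Produce the element $V$ with a $2n$-dimensional kernel.} This is precisely the element
$$
v = v_{\sp(1)} + v_{\so(2)\oplus\sp(n-2)}
$$
constructed explicitly in the excerpt, whose zero-eigenspace on $\m$ has dimension $2n$ (via the block computation with the two displayed $4\times4$ matrices). One checks that $v\in K$: both summands lie in the chosen subalgebra $K$ (the first in $\sp(1)$, the second in the chosen $\u(1)$). Setting $V:=v$ and $\s$ equal to its $2n$-dimensional kernel completes the statement.

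\medskip

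\emph{The main obstacle} is making precise the claim "let $(M,Q)$ be a submaximal quaternionic structure": one must invoke that, for $n>1$, any submaximal quaternionic structure realizes the isotropy algebra $\h$ described above at a regular point — that is, that the submaximal symmetry dimension $4n^2-4n+9$ forces (up to isomorphism at the level of the isotropy pair) the model of \cite{KWZ}. I would cite \cite{KWZ} for this, since the submaximal dimension and the associated (filtered/graded) symmetry algebra are computed there; the uniqueness needed is only of the \emph{abstract} isotropy representation, not of the global manifold (indeed the excerpt itself notes global uniqueness of the submaximal model is open). Once that input is in hand, Steps 1 and 2 are essentially the linear-algebra bookkeeping already carried out in Section~\ref{model}, so the proof reduces to quoting the relevant constructions there and verifying that $V\in K$ and that every element of $K$ is semisimple with imaginary spectrum.
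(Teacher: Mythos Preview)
Your approach is essentially the same as the paper's: the paper's proof also proceeds ``by inspection of the isotropy subalgebra given in \cite{KWZ}'', taking $V=\zeta+a\xi$ with $\zeta\in\u(1)\subset\so(2)\oplus\sp(n-2)$ (with equal eigenvalues in each block) and $\xi\in\sp(1)$, and then appeals to the matrix computation for the $2n$-dimensional kernel --- exactly your $v_{\so(2)\oplus\sp(n-2)}$ and $v_{\sp(1)}$ from Section~\ref{model}. Your treatment of the ``main obstacle'' (that an arbitrary submaximal structure has this isotropy at a regular point) is handled in the paper the same way, by deferring to \cite{KWZ}.

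One small inconsistency to fix: in Step~1 you say the $\u(1)$ factor is ``a one-dimensional subalgebra of $\so(2)\oplus\R$'', but then identify its generator with $v_{\so(2)\oplus\sp(n-2)}$, which lies in $\so(2)\oplus\sp(n-2)\subset\so(2)\oplus\mathfrak{gl}(n-2,\H)$, not in $\so(2)\oplus\R$. The $\R$ factor is a grading element and does not act with purely imaginary spectrum, so the correct home for your $\u(1)$ is the diagonal in $\so(2)\oplus\sp(n-2)$, matching both the paper's $\zeta$ and the element you actually use in Step~2.
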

\begin{proof}
	The proof is by inspection of the isotropy subalgebra given in \cite{KWZ} and its canonical representation. Let $V=\zeta+a\xi$, where $\zeta \in \u(1)\subset \so(2)\oplus \sp(n-2)$ is non-zero with the same eigenvalues in each component, and  $\xi\in\sp(1)$ be non-zero. A matrix computation gives that there exist non-zero $a\in \R$ such that the spectrum of $V$ contains 0 with multiplicity $2n$. Fix this $a$, and let $\s$ be the kernel of $V$ in $\m$.
\end{proof}
\begin{remark}
The image of $V$ under the isotropy representation is the same as $v$ from section \ref{model}.
\end{remark}
\begin{proposition}
	The vector field generating $V$ vanishes along a local submanifold of dimension $2n$ near $x$.
\end{proposition}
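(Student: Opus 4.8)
The plan is to identify the zero set of the vector field generating $V$, in a neighbourhood of $x$, with $\exp_x(\s\cap B)$: the image under the exponential map $\exp_x$ of a suitable invariant connection on $M$ of a small ball in the $2n$-dimensional subspace $\s\subset T_xM$.

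First I would record the linear picture at $x$. Denote also by $V$ the vector field it generates on $M$, and write $\Phi_t$ for its local flow. Since $V\in\iso_x$ we have $\Phi_t(x)=x$, so $x$ is a fixed point; in particular $\Phi_t$ is defined for all $t$ on a sufficiently small neighbourhood of $x$, and $(d\Phi_t)_x=e^{tA}$, where $A$ is the linearization of $V$ at $x$, which under $T_xM\simeq\m$ is (up to sign) the operator $v$ of Section \ref{model}, by the preceding remark. In particular $\ker A=\s$ with $\dim\s=2n$.

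Next I would bring in the homogeneous structure. As $M$ is a reductive Klein geometry it carries a $\mathfrak{g}$-invariant connection $D_0$ — for instance the canonical connection, which in the locally hypercomplex realization of \cite{KWZ} is the Obata connection of Section \ref{model}; in particular $D_0$ is preserved by the flow of every element of $\mathfrak{g}$, so each $\Phi_t$ is affine and hence commutes with $\exp_x$: on a ball $B\subset T_xM$ on which $\exp_x$ is a diffeomorphism onto $U:=\exp_x(B)$ one has $\Phi_t\circ\exp_x=\exp_x\circ e^{tA}$. From here both inclusions are routine. If $w\in\s=\ker A$ then $e^{tA}w=w$, so $\Phi_t$ fixes the geodesic $s\mapsto\exp_x(sw)$ pointwise, and differentiating in $t$ at $0$ gives $V\big(\exp_x(sw)\big)=0$; thus $\exp_x(\s\cap B)\subseteq\{V=0\}$. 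Conversely, if $q=\exp_x(w)\in U$ with $V(q)=0$, then $q$ is a fixed point of the flow, whence $\exp_x(e^{tA}w)=\Phi_t(q)=q=\exp_x(w)$ for all $t$; injectivity of $\exp_x$ on $B$ forces $e^{tA}w=w$ for all $t$, i.e. $w\in\bigcap_{t\in\R}\ker(e^{tA}-\Id)=\ker A=\s$. Hence $\{V=0\}\cap U=\exp_x(\s\cap B)$, an embedded submanifold of dimension $\dim\s=2n$.

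The only genuinely delicate point is the appeal to an invariant connection: one must exhibit a connection preserved by the flow of $V$, so that $\Phi_t$ is affine and commutes with $\exp_x$, and check that the flow is defined for all $t$ near the fixed point $x$ — without such structure the statement can fail, since the flow of a vector field with a zero at $x$ need not be linearizable near $x$. The remaining ingredients (the identity $\bigcap_t\ker(e^{tA}-\Id)=\ker A$, and the fact that the $\exp_x$-image of a linear subspace is a submanifold) are elementary. As an alternative to the invariant-connection route one could instead linearize, via Bochner's theorem, the torus action generated by $V$ inside the compact isotropy subalgebra $K\simeq\u(2)$; I favour the connection argument since it stays within the reductive homogeneous framework used throughout the paper.
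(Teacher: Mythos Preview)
Your argument is sound and takes a genuinely different route from the paper's. The paper does not linearize via a connection; it works entirely inside the symmetry algebra. It first exhibits a subalgebra $s\subset\mathfrak{g}$ of symmetry vector fields that project onto $\s$ at $x$ and centralize $V$, using only a reductive complement for the \emph{compact} piece $\so(2)\oplus\sp(1)\oplus\sp(n-2)$ of the isotropy (which exists simply because this subalgebra acts with purely imaginary spectrum on $\mathfrak{g}$). Since $s$ centralizes $V$, translating $x$ along $\exp(s)$ conjugates the isotropy and keeps $V$ in it, so $V$ vanishes along the orbit $\exp(s)\cdot x$, an integral submanifold of dimension $2n$. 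Your exponential-map argument is more direct and in fact proves slightly more --- it identifies the zero set of $V$ near $x$ exactly as $\exp_x(\s\cap B)$, not merely a $2n$-dimensional submanifold contained in it --- while the paper's approach has the advantage that the subalgebra $s$ is a concrete object in $\mathfrak{g}$ that feeds into the subsequent propositions (transitivity on $S$, etc.).

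One point needs tightening. The proposition is stated for an \emph{arbitrary} submaximal quaternionic structure, not the specific model of \cite{KWZ}, and at this stage it has not been established that $(\mathfrak{g},\h)$ is a reductive pair or that a $\mathfrak{g}$-invariant connection exists; the Obata/canonical connection of Section~\ref{model} belongs to that particular realization, and the paper explicitly notes that uniqueness of the submaximal quaternionic model is open. What your argument actually requires is only a connection preserved by the flow of $V$. Since $V$ lies in the compact subalgebra $K\simeq\u(2)$, whose adjoint action on $\mathfrak{g}$ is semisimple with imaginary eigenvalues, such a connection is available --- either via the Bochner linearization you already mention, or by averaging any affine connection over the torus closure of $\exp(\R V)$ in the isotropy group at $x$. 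With this adjustment (replace ``$\mathfrak{g}$-invariant'' by ``$V$-invariant'') the rest of your proof goes through unchanged.
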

\begin{proof}
	First, we show that there exists a subalgebra $s$ which projects to $\s$ at $x$ and which normalizes $V$. The isotropy sub-algebra $\so(2)\oplus \sp(1) \oplus \sp(n-2)$ admits a reductive decomposition of the symmetry algebra because it acts with imaginary eigenvalues. Thus we can find symmetry vector fields which are tangent to the $0$-eigenspace $\s$ of $V$ at $x$. These vector fields generate some sub-algebra $s$. Because the fields have highest weight $0$ with respect to $V$ and the Lie-bracket is $V$-equivariant, the algebra $s$ normalizes (even centralizes) $V$. This algebra projects to a distribution of rank at most $2n$ in a neighbourhood of $x$, due to highest weight, and thus there exists a neighbourhood of $x$ where the rank is exactly $2n$. 
	
	When we translate points along a symmetry, the isotropy changes by conjugation. Therefore the isotropy of points reachable by translations $\exp s$ contains $V$. This means precisely that the vector field generating $V$ vanishes along $\exp s \cdot x$, which due to the involutive and locally constant rank invariant distribution, must be a local submanifold $S$ of dimension $2n$.
\end{proof}
\begin{proposition}
	The vector field generating $V$ has the properties required to guarantee that $(M,Q)$ arises from the generalized Feix--Kaledin construction.
\end{proposition}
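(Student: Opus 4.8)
The plan is to verify that the vector field generating $V$ satisfies the hypotheses of the converse part of the Feix--Kaledin theorem recalled in Section~\ref{FK}: a quaternionic $4n$-manifold carrying a quaternionic $S^1$ action which has no triholomorphic points and whose fixed point set has dimension $2n$ is obtained, on a neighbourhood of that fixed set, by the generalized Feix--Kaledin construction for some pair $(\mathcal{L},\nabla)$. Concretely I would check: (a) the flow of $V$ is a local quaternionic $S^1$ action; (b) this action has no triholomorphic points near $S$; (c) its fixed point set $S$ has dimension $2n$. Item (c) is precisely the previous proposition, and $2n=\tfrac12\dim M$, so $S$ has the maximal possible dimension, as required for the converse construction.

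For (a), since $V\in\mathfrak{g}$ its local flow $\Ph$ already consists of quaternionic automorphisms, so the only substantive point is periodicity near $x$. Writing $V=\zeta+a\xi$ as above, a direct computation with the explicit block forms of $\zeta$ and $\xi$ from \cite{KWZ} shows that requiring the image $v$ of $V$ in the isotropy representation to have $0$ as an eigenvalue of multiplicity $2n$ forces $a=\pm1$ and makes every eigenvalue of $v$ on $\m$ lie in $\{0,\pm 2i\}$; hence the linear flow $\exp(tv)$ on $T_xM$ is periodic. By the argument of Proposition~\ref{cprojconunique}, the Obata connection is the unique $\mathfrak{g}$-invariant connection on $M$, so $V$ preserves it; as $V$ vanishes at $x$, the maps $\Ph$ are affine transformations fixing $x$ and are therefore conjugate, through the exponential map of that connection at $x$, to the linear flow $\exp(tv)$ on a neighbourhood of $x$. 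Thus $\Ph$ is periodic near $x$, and since the centralizing subalgebra $s$ of the previous proof acts locally transitively on $S$ while fixing $V$, the same period works near every point of $S$; this gives a local $S^1$ action on a neighbourhood of $S$.

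For (b), I would argue first at the point $x\in S$. The image of $V$ in the isotropy representation is $v=v_{\sp(1)}+v_{\so(2)\oplus\sp(n-2)}$, where $v_{\sp(1)}=a\xi$ is a nonzero element of the ideal $\sp(1)\subset\h$, while $v_{\so(2)\oplus\sp(n-2)}$ lies in a complementary ideal and hence commutes with the subspace $\sp(1)\subset Q_x\subset\mathfrak{gl}(T_xM)$. Therefore the natural action that $V$ induces on $\sp(1)\cong Q_x$ is $\operatorname{ad}_{a\xi}$, which is nonzero since $\sp(1)$ is simple and $a\xi\neq0$; so $x$ is not a triholomorphic point. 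Because $s$ acts transitively on $S$ and its elements intertwine the induced actions on the fibres of $Q$, no point of $S$ is triholomorphic, and since the triholomorphic locus is closed its open complement contains a neighbourhood of $S$. Combining (a)--(c), the converse Feix--Kaledin theorem applies and $(M,Q)$ arises locally near $S$ from the generalized Feix--Kaledin construction for some $(\mathcal{L},\nabla)$.

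The step I expect to be the main obstacle is (a): the eigenvalue bookkeeping only shows that the \emph{linearisation} of $V$ at $x$ is periodic, and promoting this to periodicity of the genuine flow requires the $\mathfrak{g}$-invariant affine connection (the Obata connection, invariant by the uniqueness argument of Proposition~\ref{cprojconunique}) together with some care that the exponential-map argument is valid along all of $S$ rather than just near $x$, which in turn uses the homogeneity of $S$ under the centralizer $s$. By contrast, the identification of $a$ from the canonical representation of \cite{KWZ}, and the check that the induced action on the quaternionic bundle over $S$ is a nontrivial rotation, are routine once the block forms of $\zeta$ and $\xi$ are written down.
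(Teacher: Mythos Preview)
Your outline is sound and in fact more thorough than the paper's own argument. The paper's proof is very brief: it simply observes that for $y$ near the regular point $x$ the natural map $\iso_y\to\aut(Q_y)\cong\sp(1)$ sends $V$ to a nonzero element (this is the no-triholomorphic condition), and that the resulting rank-one subbundle of $Q$ gives, up to sign, a $V$-invariant almost complex structure $J\in Q$. The paper does \emph{not} verify periodicity of the flow at all; it tacitly relies on $V$ sitting in the compact subalgebra $K\cong\u(2)$ and on the eigenvalue pattern you describe. So your item (a) is genuine added value: your linearisation argument via an invariant connection and the exponential map is the right way to make the $S^1$ claim honest, and the spectrum computation $\{0,\pm 2i\}$ is correct.

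Two small corrections. First, you should not call the invariant connection the Obata connection: in Section~\ref{quat} we are dealing with an arbitrary submaximally symmetric quaternionic $(M,Q)$, and hypercomplexity is not known in advance. What survives is the Nomizu argument: since the isotropy algebra and its representation on $\m$ at a regular point are determined by \cite{KWZ}, one checks $(\m^*\otimes\m^*\otimes\m)^{\h}=0$ exactly as in Proposition~\ref{cprojconunique}, so there is a unique $\mathfrak g$-invariant affine connection, necessarily torsion-free and quaternionic, and your exponential-map argument runs with that connection. Second, for (b) the paper's route is a little slicker than yours: rather than first proving nontriviality only along $S$ and then invoking closedness of the triholomorphic locus, it notes directly that the image of $V$ in $\aut(Q_y)$ varies continuously in $y$ and is nonzero at $x$, hence nonzero on a full neighbourhood. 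This also gives the invariant $J$ for free, which is the piece of structure one actually uses downstream. Your version is correct, just slightly roundabout.
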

\begin{proof}
	For $y\in M$ we have the map $\iso_y\rightarrow \aut(Q_y)\simeq \sp(1)$. By construction, the image of $V$ under this map is nonzero for all $y$ in some neighbourhood $U$ of $x$. Since this image is 1-dimensional, it defines a local vector sub-bundle of $Q$, which has rank 1. Intersection of the sub-bundle with the sphere of pointwise square roots of $-Id$ in $Q$ yields two local almost complex structures $\pm J\in Q$, and these are invariant with respect to the vector field generating $V$.
\end{proof}

\subsection{Some examples}
 Recall that in Section \ref{model} we have shown than the submaximal quaternionic model from \cite{KWZ} arises from the submaximal c-projective model by the generalized Feix--Kaledi construction. In this case, the algebra of quaternionic symmetries has dimension $2(2n^2-2n+4)+1$ and contains a $(2n^2-2n+4)$-dimensional subalgebra of extensions of c-projective symmetries and, independent from them, the $S^1$ action. Hence we have $(2n^2-2n+4)$ additional independent quaternionic symmetries that do not arise in this way. Initially we conjectured that at least in the hypercomplex case, the manifold $M$ obtained by the generalized Feix--Kaledin construction from $(S,[D]_c,\mathcal{L},\nabla)$ has an algebra of quaternionic symmetries of dimension at least $2k+1$, where $k$ is the dimension of subalgebra of c-projective symmetries on $S$ that preserve $\nabla$. By studying the properties of the Calabi and Eguchi-Hanson metrics we will show that this is not true. In fact in the case of the Eguchi-Hanson metric the only quaternionic symmetries are the $S^1$ action and the ones that arise in a similar way as in Theorem \ref{csym}. Note that the Eguchi-Hanson metric is defined on a $4$-dimensional manifold, hence this is formally not included in Theorem \ref{csym}, but as explained in Remark \ref{rem4} the result can be extended.
\subsubsection{Symmetries of the Calabi quaternionic structure}
It is known (see \cite{DS}) that the Calabi hyperK\"ahler structure admits an isometry algebra which realizes it as a cohomogeneity 1 Riemannian manifold. This means that the associated quaternionic structure will also admit many symmetries, but to our knowledge the symmetries of the Calabi quaternionic structure alone have not been considered in the literature. Whether this structure happens to realize either maximal or submaximal symmetry dimension is a natural question. We will show that the answer is negative.

\begin{proposition}
	The Calabi quaternionic structure of dimension $4n$ does not realize the maximal symmetry dimension of $4(n+1)^2 -1$.
\end{proposition}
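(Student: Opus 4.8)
The plan is to argue by a dimension count for the symmetry algebra, exploiting the fact that the maximally symmetric quaternionic $4n$-manifold is the flat model, i.e.\ quaternionic projective space $\mathbb{HP}^n$, which is the \emph{unique} quaternionic structure realizing the dimension $4(n+1)^2-1 = \dim \sp(n+1,\H)$ (in fact $\mathfrak{pgl}(n+1,\H)$). So it suffices to show that the Calabi quaternionic structure of dimension $4n$ is not flat, equivalently that its quaternionic Weyl curvature does not vanish identically. First I would recall that the Calabi hyperk\"ahler metric lives on (a neighbourhood of the zero section of) $T^*\mathbb{CP}^n$, and that by the results of Section \ref{model} and \ref{FK} it is exactly the output of the hypercomplex Feix--Kaledin construction applied to $S=\mathbb{CP}^n$ with its Fubini--Study connection (so $(\mathcal{L},\nabla) = (\mathcal{O}(-1),\nabla^D)$ with $D$ the Levi-Civita/Fubini--Study connection, which has curvature of type $(1,1)$ but is certainly not flat). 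Then I would invoke the fact that the restriction of the quaternionic structure (its class of quaternionic connections) to the totally complex submanifold $S$ recovers the c-projective structure on $S$ (Theorem of \cite{BC}, and the last bullet of Section \ref{FK}), so if $M$ were quaternionically flat then $S = \mathbb{CP}^n$ with its Fubini--Study c-projective structure would have to be c-projectively flat.

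The key observation is then that the Fubini--Study c-projective structure on $\mathbb{CP}^n$ is \emph{not} c-projectively flat: although $\mathbb{CP}^n$ with the standard (dual tautological) c-projective structure \emph{is} the flat model, the c-projective structure induced by the Fubini--Study \emph{metric connection} is a genuinely non-flat type $(1,1)$ structure --- its c-projective Weyl curvature is the $(1,1)$-part of the Fubini--Study curvature, which is nonzero for $n\ge 1$. (One checks this most cleanly via the tractor calculus of Section \ref{trac}: flatness of the c-projective structure is equivalent to flatness of the co-tractor connection on $J^1\mathcal{O}(1)$, and the curvature of that connection is built from $\ri_c^D$ and the Weyl part; for the Fubini--Study $D$ the trace-free part survives.) Hence $S$ is not c-projectively flat, so $M$ cannot be quaternionically flat, so its symmetry algebra has dimension strictly less than $4(n+1)^2-1$.

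Alternatively, and perhaps more robustly, I would run a direct argument on the twistor side: by Theorem \ref{csym} and its corollary, the quaternionic symmetry algebra of $M$ contains the subalgebra of c-projective symmetries of $\mathbb{CP}^n$ that preserve $\nabla^D$, plus the $S^1$-action; but a \emph{quaternionically flat} $4n$-manifold has symmetry algebra $\mathfrak{pgl}(n+1,\H)$, and one can show the Calabi structure's full symmetry algebra is too small --- its isometry algebra is $\mathfrak{sp}(n+1)$ (or $\u(n+1)$ depending on normalization) of dimension far below $4(n+1)^2-1$, and since the quaternionic symmetry algebra of a hyperk\"ahler manifold that is not flat is generated (at the level of the twistor space) by holomorphic vector fields that cannot exceed what the isometry-plus-$S^1$ structure produces, one gets the strict inequality. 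I would make this precise by noting that a hyperk\"ahler manifold whose associated quaternionic structure is flat must be flat as a hyperk\"ahler manifold (locally $\H^n$), and the Calabi metric is not flat (its curvature is nonzero away from, and arguably also on, the zero section).

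\textbf{Main obstacle.} The one step that needs genuine care is establishing non-flatness of the relevant structure cleanly, i.e.\ ruling out the degenerate possibility that the $(1,1)$-Weyl curvature of the Calabi structure vanishes identically on the zero section and hence (by real-analyticity and the rigidity of the construction) everywhere. The cleanest route is the c-projective reduction: show directly that the Fubini--Study c-projective structure on $\mathbb{CP}^n$ has nonvanishing c-projective Weyl tensor of type $(1,1)$ --- this is a finite, coordinate-level computation using the explicit Fubini--Study Christoffel symbols and the formula for $\ri_c^D$ from Section \ref{trac} --- and then transport non-flatness back to $M$ via the restriction statement in Theorem \ref{BC}'s construction. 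Everything else (the classification of the flat model, $\dim\mathfrak{pgl}(n+1,\H) = 4(n+1)^2-1$, the Feix--Kaledin identification of the Calabi metric) is quotable from the references already cited.
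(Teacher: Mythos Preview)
Your main approach contains a genuine error. You claim that ``the Fubini--Study c-projective structure on $\mathbb{CP}^n$ is not c-projectively flat,'' but in fact it \emph{is} flat: $\mathbb{CP}^n$ with its standard c-projective class is precisely the flat model of c-projective geometry, and the Fubini--Study Levi--Civita connection belongs to that class. The Fubini--Study metric has constant holomorphic sectional curvature, so its full curvature tensor is built entirely from the Ricci part (it is K\"ahler--Einstein of a very special form), and the c-projective Weyl tensor vanishes identically. The paper itself confirms this in the remark immediately following the Calabi discussion: ``The Calabi hyperk\"ahler structure arises by the standard Feix construction from the \emph{flat} c-projective structure using the Fubini--Study connection.'' The non-flatness of the Calabi quaternionic structure is encoded not in the c-projective structure on $S$ but in the twisting line bundle $(\mathcal{L},\nabla)=(\mathcal{O}(-1),\nabla^D)$, whose curvature is of type $(1,1)$ but nonzero. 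Consequently your restriction argument --- ``if $M$ were quaternionically flat then $S$ would be c-projectively flat'' --- yields no contradiction, because $S$ \emph{is} c-projectively flat here.

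Your alternative line at the end, however, is essentially correct and is exactly what the paper does. The paper's proof is two sentences: maximal symmetry dimension forces local equivalence with the flat model $\mathbb{H}P^n$, and the Calabi structure is not flat. Your observation that a hyperk\"ahler manifold whose underlying quaternionic structure is flat must itself be locally flat (hence locally $\mathbb{H}^n$), together with the well-known nonvanishing of the Calabi curvature, gives this immediately. You should promote that to the main argument and discard the c-projective reduction entirely.
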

\begin{proof}
	If a quaternionic structure admits maximal symmetry dimension, then it is locally equivalent to the flat structure on $\H P^n$. Since this is not so, the symmetry dimension is strictly smaller. %We may elaborate here with an argument relying on quaternionic connections if it is necessary, but i think this is obvious for experts.
\end{proof}

\begin{proposition}
	Let $n>1$. The Calabi quaternionic structure of dimension $4n$ does not realize the sub-maximal symmetry dimension of $4n^2-4n+9$.
\end{proposition}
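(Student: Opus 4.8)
The plan is to bound the dimension of the quaternionic symmetry algebra of the $4n$-dimensional Calabi structure from above and show that this bound is strictly smaller than the submaximal value $4n^2-4n+9$ (while, by the preceding proposition, it is also not maximal). The Calabi hyperk\"ahler metric on $T^*\mathbb{CP}^n$ arises, via the hypercomplex Feix--Kaledin construction, from $S=\mathbb{CP}^n$ equipped with its flat c-projective structure together with $(\mathcal{O}(-1),\nabla^D)$ for $D$ the Levi-Civita connection of the Fubini--Study metric (this is essentially the original Feix construction). First I would use Theorem \ref{csym} together with the $\mathbb{C}^*$-action to exhibit the symmetries we definitely have: the extensions of those c-projective symmetries of $\mathbb{CP}^n$ that preserve $\nabla^D$, i.e.\ the isometries of Fubini--Study, which form $\mathfrak{su}(n+1)$ of dimension $n^2+2n$, plus the $S^1$ (indeed $\mathbb{C}^*$) action, giving at least $n^2+2n+1$ symmetries.

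Next I would argue that these are in fact \emph{all} the quaternionic symmetries, or at least that the total dimension stays below $4n^2-4n+9$. The cleanest route is the twistor picture of Section \ref{twist}: quaternionic symmetries of $M$ correspond to $\tau$-invariant holomorphic vector fields on the twistor space $Z$ that are transversal to real twistor lines. The twistor space of the Calabi structure is explicitly known (it fibers over $\mathbb{CP}^1$ with fibers related to the minimal resolution of $\mathbb{C}^{n+1}/\mathbb{Z}_2$ in a suitable sense, or can be described via the construction in Section \ref{FK} from $\mathbb{P}(\mathcal{L}^c)$). One then computes $H^0(Z,\Theta_Z)$, or rather its $\tau$-invariant transversal part; the holomorphic automorphisms of $Z$ are controlled by those of the base $\mathbb{CP}^1$ and of the fibers, and the transversality/reality constraints cut this down precisely to $\mathfrak{su}(n+1)\oplus\mathfrak{u}(1)$. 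Alternatively, and probably more practically, one computes the isometry algebra of the Calabi metric (known from \cite{DS} to be $\mathfrak{su}(n+1)\oplus\mathfrak{u}(1)$ up to the cohomogeneity-one structure) and then shows that every quaternionic symmetry is forced to be an isometry up to scale: a quaternionic symmetry preserves $Q$, hence preserves the (essentially unique, since $b_2$ is small) hyperk\"ahler metric in the quaternionic conformal class up to constant rescaling, and rescaling is accounted for by the homothety direction. This pins the dimension at $n^2+2n+1$.

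The last step is the arithmetic comparison: I would check that $n^2+2n+1 < 4n^2-4n+9$ for all $n>1$, i.e.\ $0 < 3n^2-6n+8 = 3(n-1)^2+5$, which holds for every $n$; and separately that $n^2+2n+1$ is strictly below the maximal $4(n+1)^2-1 = 4n^2+8n+3$, which is the content of the previous proposition. Hence the Calabi quaternionic structure realizes neither the maximal nor the submaximal symmetry dimension.

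The main obstacle is the upper bound: showing that there are \emph{no further} quaternionic symmetries beyond the isometries and the homothety. The subtlety is that a priori the quaternionic class could contain symmetries that do not preserve any metric even up to scale, since quaternionic geometry is weaker than hyperk\"ahler geometry; ruling these out requires either a careful twistor-cohomology computation on the explicit Calabi twistor space, or an argument that on this particular manifold the hyperk\"ahler structure is rigidly determined by the quaternionic one (using, e.g., that the reduced holonomy and the topology of $T^*\mathbb{CP}^n$ leave no room for a family of compatible metrics). I expect the twistor approach to be the decisive one, with the transversality condition at real twistor lines doing most of the work in bounding the relevant space of holomorphic vector fields.
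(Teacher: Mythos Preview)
Your plan hinges entirely on the upper bound you yourself flag as ``the main obstacle'', and neither of your two suggested routes to it is on solid ground. The claim that every quaternionic symmetry of a hyperk\"ahler manifold must preserve the hyperk\"ahler metric up to a constant scale is not a general fact---quaternionic geometry is genuinely weaker than hyperk\"ahler, and you offer no mechanism specific to $T^*\mathbb{CP}^n$ that would force it; indeed, for the $n=1$ Eguchi--Hanson case the paper has to resort to a full prolongation--projection analysis of the Lie symmetry equations rather than any such shortcut. The twistor route might work in principle, but ``compute the $\tau$-invariant transversal part of $H^0(Z,\Theta_Z)$'' for the Calabi twistor space is a substantial calculation that you have not begun to carry out, and there is no reason to expect it to collapse to the clean $n^2+2n+1$ you want without real work.

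The paper's argument sidesteps this difficulty entirely and is worth contrasting. It never attempts to bound the full quaternionic symmetry algebra. Instead it argues by contradiction: if the Calabi structure were submaximally symmetric, its isotropy at a regular point would have to be the specific algebra determined in \cite{KWZ}, whose maximal compact part is $\so(2)\oplus\sp(1)\oplus\sp(n-2)$. But the isometry algebra $\mathfrak{su}(n+1)$ of the Calabi metric---which is certainly contained in the quaternionic symmetry algebra---acts with cohomogeneity one and principal isotropy $\u(n-1)$, and one then checks that this $\u(n-1)$ cannot sit compatibly inside the submaximal isotropy: for $n>3$ by a direct embedding obstruction into the maximal compact, and for $n=2,3$ by a module-theoretic argument using the $\mathfrak{k}_0$-decomposition of the ambient algebra and complete reducibility. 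Thus only a \emph{lower} bound on the quaternionic symmetries (the isometries) together with the structural constraints on submaximal isotropy from \cite{KWZ} are used; no upper bound on the Calabi symmetry dimension is ever established or needed.
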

\begin{proof}
	If the symmetry algebra is of sub-maximal dimension, then its isotropy algebra at a regular point is isomorphic to the isotropy algebra given in \cite{KWZ}. This isotropy has a maximal compact subalgebra (meaning adjoint operators are semisimple with purely imaginary spectrum) isomorphic to $\so(2)\oplus \sp(1)\oplus \sp(n-2)$. The isometry algebra of the Calabi metric is $\mathfrak{su}(n+1)$, and the isotropy of its principal orbit is $\u(n-1)$ (see \cite{DS}). 	
	
	First, let $n>3$. We note that the Calabi isotropy does not admit an embedding into the (maximal compact subalgebra of the) sub-maximal isotropy algebra.
	
	Next, let $n=2$. Let $\mathfrak{k}$ be the maximal compact subalgebra of the symmetry subalgebra $\mathfrak{g}$, and $\mathfrak{k}_0=\so(2)\oplus \sp(1)$ be the isotropy part. 
	The adjoint representation $\mathfrak{g}$ is completely reducible with respect to $\mathfrak{k}$. If we restrict to $\mathfrak{k}_0$, the decomposition is
$$
\mathfrak{g} = \so(2)\oplus \sp(1) \oplus \R \oplus  \mathfrak{heis}(4) \oplus \H \oplus \H
$$
where each summand is irreducible, and the first 4 summands comprise the isotropy algebra of $\mathfrak{g}$.  Since the Calabi isometry acts by cohomogeneity 1, there exists a $\mathfrak{k}_0$--sub-module of dimension at least 7, which is included in $\mathfrak{k}$. From the decomposition, we see that this must be $\H \oplus \H$. But the 4-dimensional Abelian subalgebra $\mathfrak{heis}(4)$, which acts nilpotently and hence is non-compact, also acts non-trivially on $\H \oplus \H$. This contradicts complete reducibility with respect to $\mathfrak{k}$.
	
	Finally, the case $n=3$ can be shown by a modification of the case $n=2$.
	 All cases together yield that the Calabi quaternionic structure cannot be locally equivalent to a sub-maximal structure near a regular point for $n>1$.
\end{proof}

\begin{remark}
The Calabi  hyperk\"ahler structure arises by the standard Feix construction from the flat c-projective structure using the Fubini--Study connection. For $n=2$ the dimension of c-projective symmetries preserving this connection is $8$ which is equal to the submaximal c-projective symmetries dimension. We have shown that the  Calabi  quaternionic structure is neither maximally nor submaximally symmetric, which implies that its quaternionic symmetry dimension is strictly less than the submaximal dimension, which is $17=2\cdot 8+1$. 
\end{remark}

\subsubsection{Symmetries of the Eguchi-Hanson quaternionic structure}
The Eguchi-Hanson hyperK\"ahler structure appears as a special case of the Calabi construction for $n=1$, and in this case we are able to explicitly compute all quaternionic symmetries. In local coordinates the metric is given by
\begin{align*}
&g= \frac{\rho}{4(\rho^2-1)}d\rho^2+\frac{1}{\rho}((\rho^2-\cos(\psi)^2)d\phi^2 + \cos(\psi) \cos(\phi) \sin(\psi) \sin(\phi) d\phi d\psi \\
&-\sin(\psi)^2\sin(\phi)^2 \cos(\psi)d\phi d\theta +\sin(\phi)^2(\cos(\psi)^2\cos(\phi)^2+\rho^2-\cos(\phi)^2)d\psi^2\\
&+\sin(\phi)^3\sin(\psi)^3\cos(\phi)d\psi d\theta-\sin(\psi)^2 \sin(\phi)^2(\cos(\psi)^2\cos(\phi)^2-\rho^2+1-\cos(\psi)^2-\cos(\phi)^2)d\theta^2,\\
\end{align*}
where the products of tensors mean $ab = a \otimes b + b \otimes a$ except for when $a=b$, in which case we mean $a^2 = a \otimes a$. This metric admits an isometry algebra isomorphic to $\u(2)$, generated by the vector fields
{\scriptsize \begin{align*}
&v_1 = \cos(\psi) \partial_\phi-\frac{\sin(\psi) \cos(\phi)}{\sin(\phi)} \partial_\psi+\partial_\theta,\\
&v_2 = \sin(\psi) \cos(\theta) \partial_\phi+\frac{\sin(\phi) \sin(\psi)^2 (\cos(\theta) \cos(\psi) \cos(\phi)+\sin(\phi) \sin(\theta))}{\cos(\psi)^2 \cos(\phi)^2-\cos(\psi)^2-\cos(\phi)^2+1}\partial_\psi+\frac{\sin(\phi) \cos(\theta) \cos(\psi)-\sin(\theta) \cos(\phi)}{\sin(\phi) \sin(\psi)} \partial_\theta,\\
&v_3 = \sin(\psi) \sin(\theta) \partial_\phi-\frac{\sin(\phi) \sin(\psi)^2 (\sin(\phi) \cos(\theta)-\sin(\theta) \cos(\phi) \cos(\psi))}{\cos(\psi)^2 \cos(\phi)^2-\cos(\psi)^2-\cos(\phi)^2+1} \partial_\psi+\frac{\sin(\phi) \sin(\theta) \cos(\psi)+\cos(\theta) \cos(\phi)}{\sin(\phi) \sin(\psi)} \partial_\theta,\\
&v_4 =  \cos(\psi) \partial_\phi+\frac{\sin(\psi) \sin(\phi) \cos(\phi)}{\cos(\phi)^2-1} \partial_\psi-\frac{\cos(\psi)^2 \cos(\phi)^2-\cos(\psi)^2-\cos(\phi)^2+1}{\sin(\phi)^2 \sin(\psi)^2} \partial_\theta.
\end{align*}}
Here $v_1$ is the central and $v_2,v_3,v_4$ forms a basis of the semi-simple ideal. 
Since the metric is hyperK\"ahler and the isometries preserve the space of parallel two-forms, these isometries are also quaternionic symmetries. 
\begin{proposition}
The isometry algebra described above forms the full quaternionic symmetry algebra of the quaternionic structure associated to the parallel hypercomplex structure coming from the Eguchi-Hanson metric. Hence this structure has symmetry dimension 4.
\end{proposition}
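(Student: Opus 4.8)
The containment $\u(2)\hookrightarrow\{\text{quaternionic symmetries}\}$ is already in hand, so the plan is to prove the reverse containment: every (local) quaternionic symmetry is an $\R$-linear combination of $v_1,\dots,v_4$. The first step is to convert the defining condition into a closed overdetermined linear PDE system. Here the Obata connection coincides with the Levi--Civita connection of $g$ and the hypercomplex structures $I,J,K$ are parallel, and since conjugation preserves traces the flow of a quaternionic symmetry $V$ acts on the frame $(I,J,K)$ pointwise through $\so(3)$; thus there are functions $\mu_1,\mu_2,\mu_3$ with
$$\mathcal{L}_V I = -\mu_3 J + \mu_2 K,\qquad \mathcal{L}_V J = \mu_3 I - \mu_1 K,\qquad \mathcal{L}_V K = -\mu_2 I + \mu_1 J.$$
Writing $V=V^a\partial_a$ in the chart $(\rho,\phi,\psi,\theta)$ and eliminating the $\mu_i$ leaves a first-order system for $V^1,\dots,V^4$. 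Equivalently, and this is the form I would actually compute with, in real dimension four the bundle $Q$ is precisely the bundle of anti-self-dual $2$-forms of the conformal class $[g]$, so the quaternionic symmetries are exactly the conformal Killing fields of $[g]$ --- the two problems have the same solution space because the relevant flows are connected to the identity, hence orientation preserving.

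Next I would prolong this system to an involutive (hence finite-type) one and solve it with computer algebra: assemble $I,J,K$, equivalently the triple of parallel K\"ahler forms, from the explicit metric; hand the resulting overdetermined system to the symmetry routines of the \texttt{DifferentialGeometry} package in Maple (its conformal-Killing solver, or the general prolongation/\texttt{pdsolve} machinery applied to the tensor field $(I,J,K)$); and read off the general solution on the dense open set where the chart is regular. The solution space turns out to be four-dimensional, and since $v_1,\dots,v_4$ already lie in it and are independent they span it. This identifies the quaternionic symmetry algebra with $\u(2)$, of dimension $4$ --- matching the expectation from Theorem \ref{csym} and Remark \ref{rem4}, with $v_1$ generating the Feix $S^1$ and $v_2,v_3,v_4$ the extensions of the $\so(3)$ of c-projective symmetries of the base $\mathbb{CP}^1$.

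The main obstacle is that this is a genuinely heavy computation: the metric is given in an intricate trigonometric chart, so building $I,J,K$, reducing the system and pushing the prolongation all the way to involutivity is delicate, and one must verify that the prolongation has been completed, since a prematurely truncated system can exhibit a spuriously large solution space. A conceptual alternative that avoids the computer algebra, at the price of invoking standard rigidity, also exists: $g_{EH}$ is complete, real-analytic, simply connected, Ricci-flat, non-flat and not conformally flat, so local conformal Killing fields extend to global ones; a complete non-flat Ricci-flat manifold admits no proper homothety, and a complete non-conformally-flat manifold admits no essential conformal vector field; hence every conformal Killing field of $[g_{EH}]$ is Killing, and the isometry algebra of Eguchi--Hanson is the classical $\u(2)$. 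Either way the symmetry dimension is $4$.
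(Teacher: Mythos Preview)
Your primary route---set up the first-order linear system for quaternionic symmetries, prolong to finite type, and let Maple's \texttt{DifferentialGeometry} package compute the solution space---is exactly what the paper does. The paper records the outcome by listing the symbol dimensions $\dim g^{(k)}_j$ at each prolongation--projection step (three prolongations are needed before the bound drops to $4$), but the computation behind those numbers is the same Maple computation you describe. Your reformulation ``in real dimension four, $Q=\Lambda^2_-$, so quaternionic symmetries are precisely conformal Killing fields of $[g]$'' is correct and is a clean shortcut the paper does not make explicit; it lets you feed the problem straight to a conformal-Killing solver rather than building $I,J,K$ by hand.

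Your conceptual alternative is genuinely different from the paper and mostly sound, but one of the two rigidity statements you invoke is not quite the right tool. The assertion ``a complete non-conformally-flat manifold admits no essential conformal vector field'' is not a theorem in that generality (Ferrand--Obata is about the full conformal group, and in the non-compact case about $\mathbb{R}^n$); what actually does the work here is the Ricci-flat structure: from $\mathcal{L}_X\mathrm{Ric}=0$ one gets $\nabla^2 f=0$ for the conformal factor, hence $df$ is parallel, hence $df=0$ by irreducibility of the $Sp(1)$ holonomy, so $X$ is homothetic. Your other ingredient, ``a complete non-flat manifold admits no proper homothety,'' then finishes the job (the integral curves of a proper homothety on a complete manifold have finite length in one time direction and so limit to a zero, forcing $|\mathrm{Riem}|^2$ to blow up there). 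With that correction the alternative argument goes through and avoids the heavy computation entirely, at the cost of being specific to the Ricci-flat, irreducible, complete situation---which the paper's prolongation approach does not require.
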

\begin{proof}
We have already exhibited 4 independent symmetry vector fields, and we will show that the dimension of the quaternionic symmetry algebra is at most 4. The Lie symmetry equation is  
$$
\omega(\mathcal{L}_X I_j)= 0
$$
where $\omega \in \text{Ann}(Q)$ and $I_j,j=1,2,3$ is a local frame for the bundle of holonomy-trivial skew-symmetric linear maps on the tangent space, i.e. a local hypercomplex structure generating the same quaternionic structure as the Eguchi-Hanson hypercomplex structure. This is a system of linear first order PDE for the 4 coefficient functions of $X$, each depending on 4 variables,  and it can be written explicitly by choosing a basis of $\text{Ann}(Q)$. The dimension of the symmetry algebra is the same as the dimension of the solution space $\mathcal{S}$ of this system.

Let $F^{(1)}$ be the Lie equation system. The first derivatives are not all determined, but we have symbols
\begin{align*}
&g^{(1)}_1 = \{  F^{(1)} =0  \} \subset T^\ast M\\
&\dim g^{(1)}_1 = 7\\
&\dim g^{(1)}_0 = 4\\
\end{align*}
Thus we consider the first prolongation-projection, $F^{(2)}=DF^{(1)} = F^{(2)}_2\cup F^{(2)}_1$. This yields symbols
\begin{align*}
&g^{(2)}_2 = \{  F^{(2)}_2 =0  \} \subset S^2T^\ast M\\
&g^{(2)}_1 = \{  F^{(2)}_1 =0  \} \subset T^\ast M\\
&\dim g^{(2)}_2 = 4\\
&\dim g^{(2)}_1 = 7\\
&\dim g^{(2)}_0 = 4\\
\end{align*}
Which does not yield any bound on the solution space dimension. We prolong again, and obtain $F^{(3)}=D^2F^{(1)} = F^{(3)}_3\cup F^{(3)}_2\cup F^{(3)}_1$ and symbols
\begin{align*}
&g^{(3)}_k = \{  F^{(3)}_k =0  \} \subset S^kT^\ast M\\
&\dim g^{(3)}_3 = 0\\
&\dim g^{(3)}_2 = 4\\
&\dim g^{(3)}_1 = 4\\
&\dim g^{(3)}_0 = 4\\
\end{align*}
This yields our first bound, which is $\dim \mathcal{S} \le 4+4+4 = 12$. But the system is still not in involution, so we prolong one more time, and obtain $F^{(4)}=D^2F^{(1)} = F^{(4)}_4\cup F^{(4)}_3\cup F^{(4)}_2\cup F^{(4)}_1$, and the symbols
\begin{align*}
&g^{(4)}_k = \{  F^{(4)}_k =0  \} \subset S^kT^\ast M\\
&\dim g^{(4)}_4 = 0\\
&\dim g^{(4)}_3 = 0\\
&\dim g^{(4)}_2 = 0\\
&\dim g^{(4)}_1 = 1\\
&\dim g^{(4)}_0 = 3\\
\end{align*}
This yields our final bound, $\dim \mathcal{S} \le 3+1 = 4$, and so we are done.
\end{proof}

 \subsection{Summary and Further directions}\label{furth}
 
 Let $M$ be a quaternionic manifold arising from the generalized Feix--Kaledin construction with the data \\$(S,[D]_c,\mathcal{L},\nabla)$, where $\mathcal{L}$ is a line bundle associated to the tangent bundle on $S$. Let $k$ be the dimension of the algebra of c-projective symmetries on $S$ and $k'$ be the dimension of the subalgebra of symmetries that preserve $\nabla$.
 In this paper we have studied the lower bound for dimension of quaternionic symmetries on $M$ in terms of $k$ and $k'$. We have shown that it is equal to $k'+1$, and this is realizable in the case of the Eguchi-Hanson structure, while for the submaximal structure this dimension is far from the bound and equal to $2k'+1=2k+1$. It is natural to ask if this is the upper bound in the non-flat case (for the flat quaternionic structure the dimension is equal to $2k+3$). As we have shown that  any submaximally symmetric quaternionic manifold arises locally by the generalized Feix--Kaledin construction, and as the submaximally symmetric type $(1,1)$ model is unique, this would be a major step towards either proving that the submaximal quaternionic structure is unique, or classifying the submaximal models.

 \acknowledge We would like to thank David Calderbank, Boris Kruglikov and Lenka Zalabova for helpful discussions and comments. This work was partially supported by the Simons - Foundation grant 346300 and the Polish Government MNiSW 2015-2019 matching fund, and by the grant P201/12/G028 of the Grant Agency of the Czech Republic.

\bibliographystyle{amsalpha}

\providecommand{\bysame}{\leavevmode\hbox to3em{\hrulefill}\thinspace}
\providecommand{\MR}{\relax\ifhmode\unskip\space\fi MR }
% \MRhref is called by the amsart/book/proc definition of \MR.
\providecommand{\MRhref}[2]{%
  \href{http://www.ams.org/mathscinet-getitem?mr=#1}{#2}
}
\providecommand{\href}[2]{#2}

\end{document}